\setlist[enumerate,1]{label={\upshape (\roman*)}}
\theoremstyle{plain}
\newtheorem{theorem}{Theorem}[section]
\crefname{theorem}{Theorem}{Theorems}
\newtheorem{proposition}[theorem]{Proposition}
\crefname{proposition}{Proposition}{Propositions}
\crefname{corollary}{Corollary}{Corollaries}
\newtheorem{lemma}[theorem]{Lemma}
\crefname{lemma}{Lemma}{Lemmas}
\crefname{conjecture}{Conjecture}{Conjectures}
\crefname{problem}{Problem}{Problem}
\newtheorem{claim}[theorem]{Claim}
\crefname{claim}{Claim}{Claims}
\crefname{observation}{Observation}{Observations}
\crefname{setup}{Setup}{Setups}
\crefname{fact}{Fact}{Facts}
\crefname{algorithm}{Algorithm}{Algorithms}
\crefname{remark}{Remark}{Remarks}
\crefname{example}{Example}{Examples}
\theoremstyle{definition}
\newtheorem{definition}[theorem]{Definition}
\crefname{definition}{Definition}{Definitions}
\crefname{construction}{Construction}{Constructions}
\crefname{question}{Question}{Questions}
\numberwithin{equation}{section}
\global\long\def\E{\mathbb{E}}
\global\long\def\P{\mathbb{P}}
\def\eps{\varepsilon}
\renewcommand{\int}[1]{\mathop{\mkern 0mu\mathrm{int}}\nolimits(#1)}
\tikzset{snake it/.style={decorate, decoration=snake}}
\definecolor{DarkDesaturatedBlue}{HTML}{3A3556}
\definecolor{VividOrange}{HTML}{F15918}
\definecolor{PureOrange}{HTML}{FFBA00}
\definecolor{LightGrayishPink}{HTML}{EEC5D5}
\definecolor{VerySoftBlue}{HTML}{B5AFDB}
\begin{document}

\title{Counting spanning subgraphs in dense hypergraphs}

\author{Richard Montgomery\thanks{Supported by the European
Research Council (ERC) under the European Union Horizon 2020 research and innovation programme (grant agreement No. 947978) and the Leverhulme Trust. Mathematics Institute, Zeeman Building, University of Warwick, Coventry CV4 7AL, UK. \{richard.montgomery$|$matias.pavez-signe\}@warwick.ac.uk.} \and Mat\'ias Pavez-Sign\'e$^\ast$}
\date{}\maketitle

\begin{abstract} We give a simple method to estimate the number of distinct copies of some classes of spanning subgraphs in hypergraphs with high minimum degree. In particular, for each $k\geq 2$ and $1\leq \ell\leq k-1$, we show that every $k$-graph on $n$ vertices with minimum codegree at least
\def\arraystretch{1.5}
\[
\left\{\begin{array}{ll}
\left(\dfrac{1}{2}+o(1)\right)n & \text{ if }(k-\ell)\mid k,\vspace{.2cm}\\
\left(\dfrac{1}{\lceil \frac{k}{k-\ell}\rceil(k-\ell)}+o(1)\right)n & \text{ if }(k-\ell)\nmid k,  
\end{array}
\right.
\]
contains $\exp(n\log n-\Theta(n))$ Hamilton $\ell$-cycles as long as $(k-\ell)\mid n$. When $(k-\ell)\mid k$ this gives a simple proof of a result of Glock, Gould, Joos, K\"uhn and Osthus, while, when $(k-\ell)\nmid k$ this gives a weaker count than that given by Ferber, Hardiman and Mond or, when $\ell<k/2$, by Ferber, Krivelevich and Sudakov, but one that holds for an asymptotically optimal minimum codegree bound.
\end{abstract}

%%%%%%%%%%%%%%%%%%%%%%%%%%%%%%%%%%%%%%%%%%%%%%%%%%%%%%%%%%%%%%%%%%%%%%%%%%%%%%%%%%%%%%%%%%%%%%%%%%%%%%%%%%%%%%%%%%%%%%%%%%%%%%%%%%%%%%%%%%%%%%%%%%%%%
%%%%%%%%%%%%%%%%%%%%%%%%%%%%%%%%%%%%%%%%%%%%%%%%%%%%%%%%%%%%%%%%%%%%%%%%%%%%%%%%%%%%%%%%%%%%%%%%%%%%%%%%%%%%%%%%%%%%%%%%%%%%%%%%%%%%%%%%%%%%%%%%%%%%%
%%%%%%%%%%%%%%%%%%%%%%%%%%%%%%%%%%%%%%%%%%%%%%%%%%%%%%%%%%%%%%%%%%%%%%%%%%%%%%%%%%%%%%%%%%%%%%%%%%%%%%%%%%%%%%%%%%%%%%%%%%%%%%%%%%%%%%%%%%%%%%%%%%%%%
%%%%%%%%%%%%%%%%%%%%%%%%%%%%%%%%%%%%%%%%%%%%%%%%%%%%%%%%%%%%%%%%%%%%%%%%%%%%%%%%%%%%%%%%%%%%%%%%%%%%%%%%%%%%%%%%%%%%%%%%%%%%%%%%%%%%%%%%%%%%%%%%%%%%%
\section{Introduction}
A central problem in extremal graph theory is to find sufficient degree conditions which force the containment of a given spanning subgraph, and here a classical result of Dirac~\cite{Dirac} from 1952 states that every graph on $n\ge 3$ vertices with minimum degree at least $n/2$ contains a Hamilton cycle.
In 1995, Bollob\'as~\cite{bollobas1995} and Bondy~\cite{bondy1995basic} asked for estimates of the number of distinct Hamilton cycles in graphs satisfying Dirac's condition. S\'ark\"ozy, Selkow, and Szemer\'edi~\cite{SARKOZY2003237} used the {regularity method} in 2003 to show that every graph with $n\geq 3$ vertices and minimum degree at least $n/2$ contains at least $c^nn!$ Hamilton cycles, for some constant $c>0$. This cannot be improved to any $c>1/2$, as for fixed $p>1/2$ the typical random graph $G(n,p)$ satisfies Dirac's condition and has $(1-o(1))^np^nn!$ distinct Hamilton cycles (see~\cite{janson_1994}). In 2009, Cuckler and Kahn~\cite{cuckler2009hamiltonian} obtained precise estimates of the number of distinct Hamilton cycles in terms of the minimum degree of the host graph.
In particular, they showed that every $n$-vertex graph with minimum degree at least $n/2$ contains at least $(\frac 12-o(1))^nn!$ distinct Hamilton cycles, thus matching the bound given by random graphs.

Our purpose here is to introduce a simple method to bound below the number of copies of many different spanning subgraphs in graphs and hypergraphs with high minimum degree, and apply this to give new counting results in dense hypergraphs (Theorems~\ref{mainthm} to~\ref{thm:factors}). However, let us already illustrate this method as it would apply to Hamilton cycles in an $n$-vertex graph $G$ with minimum degree $\delta(G)\geq (\frac{1}{2}+\eps)n$ with $\eps>0$ fixed and $n$ large. Set $r=\mu n$ with $1/n\ll \mu \ll \eps$, and partition $V(G)=V_1\cup \ldots \cup V_r$ by choosing the location of each vertex independently and uniformly at random. With probability at least $e^{-n}$, the minimum degree of each subgraph $G[V_i]$ will be at least $(\frac{1}{2}+\frac{\eps}{2})|V_i|$ and there will be a disjoint collection of $r$ edges in $G$ connecting the sets $V_1,\ldots,V_r$ in a cycle in order and connecting $V_r$ to $V_1$ (see Figure~\ref{fig:new}). Applying classical methods to find a Hamilton path through each subgraph $G[V_i]$ to connect these edges creates a Hamilton cycle of $G$ which passes through each of the sets $V_1,\ldots,V_r$ in order. Even though the probability of success here is small, only at least $e^{-n}$, for this to be true it is easy to show that $G$ must contain at least $c^nn!$ distinct Hamilton cycles (for some fixed $0<c\ll \varepsilon$).

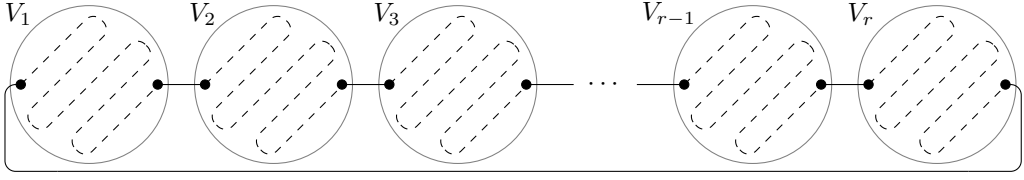
\begin{figure}[t]
    \centering
    \begin{tikzpicture}[scale=.7]
\def\spacer{3.5}
\def\circrad{1.5};
\def\innerrad{1.3};
\def\vxrad{2.5pt};

%coordinates
\foreach \n in {1,2,3}
{
\coordinate (A\n) at ($(\n*\spacer,0)$);
\coordinate (B\n) at ($(A\n)-(\innerrad,0)$);
\coordinate (C\n) at ($(A\n)+(\innerrad,0)$);
}
\foreach \n in {5,6}
{
\coordinate (A\n) at ($(\n*\spacer-0.4*\spacer,0)$);
\coordinate (B\n) at ($(A\n)-(\innerrad,0)$);
\coordinate (C\n) at ($(A\n)+(\innerrad,0)$);
}
\coordinate (B4) at ($(A3)+(\spacer,0)-(\innerrad,0)$);
\coordinate (C4) at ($(A5)-(\spacer,0)+(\innerrad,0)$);
\coordinate (A4) at ($0.5*(B4)+0.5*(C4)$);

%vertices
\foreach \n in {1,2,3,5,6}
{
\draw [black!50] (A\n) circle[line width =2pt,radius=\circrad];
}

%setlabels
\foreach \n in {1,2,3}
{
\draw ($(A\n)+(135:1.9)$) node {$V_{\n}$};
}
\draw ($(A5)+(135:1.9)-(0.25,0)$) node {$V_{r-1}$};
\draw ($(A6)+(135:1.9)-(0.1,0)$) node {$V_{r}$};
\draw (A4) node {$\cdots$};

%sets and Hamilton paths
\def\nudge{0.232};
\foreach \n in {1,2,3,5,6}
{
\draw [fill] (B\n) circle [radius=\vxrad];
\draw [fill] (C\n) circle [radius=\vxrad];
\draw [rounded corners,dashed] (B\n) -- ++(45:1.5*\innerrad) -- ++(315:\nudge);
\draw [rounded corners,dashed] ($(B\n)+(45:1.5*\innerrad)+(315:\nudge)$) -- ++(315:\nudge)-- ++(225:1.8*\innerrad) -- ++(315:\nudge);
\draw [rounded corners,dashed] ($(B\n)+(45:-0.3*\innerrad)+(315:3*\nudge)$) -- ++(315:\nudge)-- ++(45:2*\innerrad) -- ++(315:\nudge);
\draw [rounded corners,dashed] ($(B\n)+(45:1.7*\innerrad)+(315:5*\nudge)$) -- ++(315:\nudge)-- ++(225:1.8*\innerrad) -- ++(315:\nudge);
\draw [rounded corners,dashed] ($(B\n)+(45:-0.1*\innerrad)+(315:7*\nudge)$) -- ++(315:\nudge)-- ++(45:1.55*\innerrad);
}

%edges
\foreach \x/\y in {1/2,2/3,3/4,4/5,5/6}
{
\draw (C\x)--(B\y);
}
\draw [rounded corners] (B1) -- ++(-0.3,0) -- ++(0,-1.1*\circrad) -- ++ (1,0);
\draw [rounded corners] (C6) -- ++(0.3,0) -- ++(0,-1.1*\circrad) -- ++ (-1,0);
\draw ($(B1)+(-0.3,0)+(0,-1.1*\circrad)+(1,0)$) -- ($(C6) +(0.3,0)+(0,-1.1*\circrad)+ (-1,0)$);
        \end{tikzpicture}
        \caption{A Hamilton cycle passing through the sets $V_1,V_2,\ldots,V_r$ in order.}\label{fig:new}
%\vspace{-0.2cm}
\end{figure}

For each $i\in [r]$ in the above argument we should expect $\delta(G[V_i])\ge (\frac{1}{2}+\frac{\eps}{2})|V_i|$ with some constant probability close to 1 (as $\mu\ll \eps$), and so we would expect this to hold for all $i\in [r]$ with probability at least $2^{-r}\geq e^{-n}$ (say) for $n$ large. Because of the dependencies here, this is not straightforward to prove, but we do this with an iterative partitioning argument inspired by a technical aspect of the iterative absorption techniques introduced by Barber, Lo, K\"uhn and Osthus~\cite{barber2016edge}. The result of this argument in the graph case for Hamilton cycles is much weaker than what is already known, but this argument can be used easily in hypergraphs if the subgraph sought can be constructed from pieces like the cycle in Figure~\ref{fig:new}. This allows counting results to be inferred from the extremal minimal degree for these pieces applied to each hypergraph induced on the sets $V_i$ in the partition (with some modification to make the required connections).

Dirac's theorem has been generalised to give minimum degree conditions implying the containment of many other spanning subgraphs, including $F$-factors~\cite{hajnal1970proof,kuhn2009minimum}, trees with bounded degree~\cite{CLNS10,KSS95,KSS2001}, powers of Hamilton cycles~\cite{KSS98,KSS1998b}, and, more generally, graphs with bounded degree and sublinear bandwith~\cite{BST2009} (see also the excellent surveys~\cite{kuhn2009embedding,simonovits2019embedding}). For hypergraphs much less is known, but we will recall the progress made for Hamilton $\ell$-cycles, powers of tight cycles and factors (in Section~\ref{sec:diractype}), before discussing previous counting results and our main theorems (in Section~\ref{sec:counting}). Our technique may be applicable to other spanning subgraphs, and in particular we note that recent work of Gupta, Hamann, M{\"u}yesser, Parczyk and Sgueglia~\cite{gupta2022general} classifies some hypergraphs our techniques may apply to.

%%%%%%%%%%%%%%%%%%%%%%%%%%%%%%%%%%%%%%%%%%%%%%%%%%%%%%%%%%%%%%%%%%%%%%%%%%%%%%%%%%%%%%%%%%%%%%%%%%%%%%%%%%%%%%%%%%%%%%%%%%%%%%%%%%%%%%%%%%%%%%%%%%%%%
%%%%%%%%%%%%%%%%%%%%%%%%%%%%%%%%%%%%%%%%%%%%%%%%%%%%%%%%%%%%%%%%%%%%%%%%%%%%%%%%%%%%%%%%%%%%%%%%%%%%%%%%%%%%%%%%%%%%%%%%%%%%%%%%%%%%%%%%%%%%%%%%%%%%%
%%%%%%%%%%%%%%%%%%%%%%%%%%%%%%%%%%%%%%%%%%%%%%%%%%%%%%%%%%%%%%%%%%%%%%%%%%%%%%%%%%%%%%%%%%%%%%%%%%%%%%%%%%%%%%%%%%%%%%%%%%%%%%%%%%%%%%%%%%%%%%%%%%%%%
%%%%%%%%%%%%%%%%%%%%%%%%%%%%%%%%%%%%%%%%%%%%%%%%%%%%%%%%%%%%%%%%%%%%%%%%%%%%%%%%%%%%%%%%%%%%%%%%%%%%%%%%%%%%%%%%%%%%%%%%%%%%%%%%%%%%%%%%%%%%%%%%%%%%%

\subsection{Dirac-type problems in hypergraphs}\label{sec:diractype}
 A $k$-uniform hypergraph (or $k$-graph) is a hypergraph where every edge consists of exactly $k$ vertices. For a $k$-graph $H$ and a subset of vertices $S\subset V(H)$, the degree of $S$, denoted $d_H(S)$, is the number of edges in $H$ containing $S$. For $1\le d\le k-1$, the minimum $d$-degree of $H$, denoted $\delta_d(H)$, is the minimum of $d_H(S)$ over all subsets $S\subseteq V(H)$ with $|S|=d$. When $d=k-1$ this is the \textit{minimum codegree}, $\delta(H)=\delta_{k-1}(H)$.

The first subgraphs we consider in hypergraphs are the Hamilton \textit{$\ell$-cycles}, a well-studied generalisation of Hamilton cycles to hypergraphs.
For $k\ge 2$ and $0\le \ell\le k-1$, say that a $k$-graph $C$ is an {$\ell$-cycle} if there is a cyclic ordering $v_1,\ldots, v_t$ of $V(C)$ such that every edge of $C$ consists of $k$ consecutive vertices and every two consecutive edges intersect in exactly $\ell$ vertices. If $\ell=k-1$, then $C$ is called a \textit{tight cycle}, and, if $\ell=0$, then $C$ is a matching. A $k$-graph $H$ contains a \textit{Hamilton $\ell$-cycle} if there is an $\ell$-cycle $C\subseteq H$ with $V(C)=V(H)$ (where it can only exist if $k-\ell$ divides $|H|$).

The asymptotic minimum codegree required to guarantee a Hamilton $\ell$-cycle in an $n$-vertex $k$-graph is known due to R\"odl, Ruci\'nski, and Szemer\'edi~\cite{rodl2008approximate} if $(k-\ell)|\ell$ and to K\"uhn, Mycroft, and Osthus~\cite{kuhn2010hamilton} if $(k-\ell)\nmid k$, whose results together give the following theorem.

\begin{theorem}\label{theorem:Dirac} For each $\gamma>0$, $k\ge 2$, and $1\leq \ell<k$, there exists $n_0$ such that the following holds for all $n\ge n_0$ with $(k-\ell)\mid n$. If $H$ is an $n$-vertex $k$-graph with $\delta(H)\ge (\delta_{k,\ell}+\gamma)n$, where
\begin{equation}\label{eq:deltakl}
\delta_{k,\ell}:=\left\{\begin{array}{ll}
\dfrac{1}{2} & \text{ if }(k-\ell)\mid k, \\
\dfrac{1}{\lceil\frac{k}{k-\ell}\rceil(k-\ell)} & \text{ if }(k-\ell)\nmid k,
\end{array}
\right.
\end{equation} then $H$ contains a Hamilton $\ell$-cycle.
\end{theorem}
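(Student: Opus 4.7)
The plan is to use the absorption method of R\"odl, Ruci\'nski and Szemer\'edi. Fix $\gamma>0$, suppose $\delta(H)\ge (\delta_{k,\ell}+\gamma)n$, and set up three ingredients: a small \emph{absorbing $\ell$-path} $P_A$, a small \emph{reservoir} $R\subseteq V(H)$, and an \emph{almost-spanning} collection of long $\ell$-paths covering all but a tiny fraction of $V(H)\setminus(V(P_A)\cup R)$. These pieces are then chained together via a \emph{connecting lemma} and the leftover is absorbed into $P_A$.

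The heart of the argument is the connecting lemma: any two disjoint ordered $\ell$-tuples of vertices of $H$ can be joined by a short $\ell$-path using only vertices of $R$, up to polynomially many bad pairs. For $(k-\ell)\mid k$, the codegree threshold $n/2$ lets one greedily extend an $\ell$-tuple through edges overlapping in exactly $\ell$ vertices and meet a prescribed endpoint; the divisibility condition is exactly what makes the lengths align so that successive edges form an $\ell$-path. For $(k-\ell)\nmid k$, the smaller threshold reflects the extremal construction of K\"uhn, Mycroft and Osthus, based on a partition $V(H)=V_1\cup\cdots\cup V_{\lceil k/(k-\ell)\rceil}$ with one part of size roughly $\delta_{k,\ell} n$. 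The connecting argument then has to track which of the possible length-residues modulo $k-\ell$ is being built and show that the one compatible with a given pair of endpoints can always be realised from the codegree hypothesis.

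The absorbing path $P_A$ is constructed by a probabilistic argument: for each $v\in V(H)$ the codegree condition yields many short $\ell$-path \emph{absorbers} that, after inserting $v$, can be rearranged into an $\ell$-path with the same endpoints, and a random sample concatenated via the connecting lemma gives a single $P_A$ that can absorb any leftover set of size at most $\eta n$. The reservoir $R$ is selected similarly. The almost-spanning cover is then obtained from a weak regularity lemma for $k$-graphs together with an almost-perfect fractional $\ell$-path tiling of the reduced hypergraph; the bounded number of resulting long paths are chained through $R$ using the connecting lemma, and the at most $\eta n$ leftover vertices are swallowed by $P_A$. The main obstacle is the connecting lemma when $(k-\ell)\nmid k$: without the majority threshold $n/2$ one cannot simply iterate a single-neighbourhood majority argument, and the proof must exploit the divisibility structure of $k$ and $\ell$ to show that precisely the length-residues matching the extremal construction's partition structure can be avoided while still finding the desired connection.
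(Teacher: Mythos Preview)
The paper does not give its own proof of Theorem~\ref{theorem:Dirac}; it is quoted as a known result of R\"odl, Ruci\'nski and Szemer\'edi (for $(k-\ell)\mid k$) and K\"uhn, Mycroft and Osthus (for $(k-\ell)\nmid k$), and is used as a black box (indeed, the appendix invokes it inside the proof of the related Lemma~\ref{lemma:connected}). Your outline of the absorption method is in the right spirit and broadly matches the strategy of those cited works, but two points should be corrected.

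First, for $\ell<k-1$ the absorbing path does not absorb single vertices. Each edge of an $\ell$-path contributes $k-\ell$ new vertices, so the leftover must be grouped into $(k-\ell)$-sets and each such set absorbed as a unit. This is not a cosmetic change: not every $(k-\ell)$-set has many absorbers, and the K\"uhn--Mycroft--Osthus argument (mirrored in the paper's appendix via Lemmas~\ref{lemma:badsets} and~\ref{lemma:absorbing}) must show that almost all $(k-\ell)$-sets are ``good'', arrange that vertices outside $P_A$ lie in few bad sets, and then find a perfect matching of the leftover in the auxiliary $(k-\ell)$-graph of good sets (Theorem~\ref{theorem:matching}) before absorbing. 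Your description ``for each $v\in V(H)$ \ldots\ can absorb any leftover set of size at most $\eta n$'' only handles the tight-cycle case $\ell=k-1$.

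Second, you identify the connecting lemma as the main obstacle when $(k-\ell)\nmid k$, but this is not where the difficulty lies. The connecting lemma used here (Corollary~5.4 of K\"uhn--Mycroft--Osthus, restated as Lemma~\ref{lemma:diameter}) holds under \emph{any} linear minimum codegree $\delta(H)\ge\mu n$ with $\mu>0$, irrespective of the divisibility of $k$ by $k-\ell$, and with no exceptional pairs. The place the threshold $\delta_{k,\ell}$ actually enters is in building the almost-spanning structure and in controlling the bad $(k-\ell)$-sets well enough that the final matching step succeeds; your sketch, by contrast, attributes the threshold to a length-residue obstruction in the connecting step that does not in fact arise.
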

Due to Theorem~\ref{theorem:Dirac}, we say the minimum codegree threshold for a $k$-graph to contain a Hamilton $\ell$-cycle is $\delta_{k,\ell}$.
Note that a Hamilton tight cycle contains a Hamilton $\ell$-cycle for every $\ell$ with $(k-\ell)\mid k$. The results in Theorem~\ref{theorem:Dirac} are tight in every case up to the `error term' of $\gamma n$. For more discussion of this, and the seemingly much more difficult problem for other degree bounds $\delta_d(H)$, $d<k-1$, see the survey by K\"uhn and Osthus~\cite{kuhn2014hamilton}.

We will also consider the powers of Hamilton tight cycles, where, for each $t\ge k\ge 2$, a $k$-graph $C$ is the $(t-k+1)$th power of a tight cycle if there is a cyclic ordering $v_1,\ldots,v_s$ of $V(C)$ so that $\{v_i,\ldots,v_{i+t-1}\}$ spans a $k$-uniform clique for all $i\in [s]$ (working modulo $s$). When  $k=2$, this coincides with the usual definition of powers of cycles in graphs, where for each $t\geq 2$ the minimum degree threshold for the containment of the $(t-1)$th power of a Hamilton cycle was famously shown to be $\frac{t-1}{t}$ by Koml\'os, S\'ark\"ozy, and Szemer\'edi~\cite{KSS98,KSS1998b}.
In 2020, Bedenknecht and Reiher~\cite{bedenknecht2020squares} proved that $3$-graphs with minimum codegree at least $(4/5+o(1))n$ contain the square of a tight Hamilton cycle (which corresponds to $t=k+1$ and $k=3$), where it is known that the constant $4/5$ cannot be reduced below $3/4$ for all $n$. This was recently widely extended by Pavez-Sign\'e, Sanhueza-Matamala and Stein~\cite{pavez2021towards}, who showed that, with $\gamma>0$ fixed, if an $n$-vertex $k$-graph $H$ has minimum codegree
\begin{equation}\label{eq:cliques}\delta(H)\ge \left(1-\frac{1}{\binom{t-1}{k-1}+\binom{t-2}{k-2}}+\gamma\right)n,\end{equation}
then $H$ contains the $(t-k+1)$th power of a tight Hamilton cycle, provided that $n$ is sufficiently large. It is not known whether the bounds given in \eqref{eq:cliques} are tight up to $\gamma n$, though this is true for the cases $t\geq k=2$ and $t=k\geq 2$ which were already known~\cite{rodl2008approximate,KSS98}.

Finally, we will consider factors in hypergraphs. For a $k$-graph $F$, a $k$-graph $H$ contains an $F$-factor if it contains a collection of vertex-disjoint copies of $F$ covering every vertex in $H$. Thus, a necessary condition for an $F$-factor in $H$ is that $|F|$ divides $|H|$. For $1\le d\le k-1$, then, let $\mu_{k,d}(F)$ be the smallest number such that for every $\gamma>0$, there is $n_0$ such that if $H$ is an $n$-vertex graph with $n\ge n_0$ divisible by $|F|$ and $\delta_d(H)\ge (\mu_{k,d}(F)+\gamma)\binom{n}{k-d}$, then $H$ contains an $F$-factor. In contrast to the graph case, where the threshold is known (with moreover a much stronger error term) for all fixed $F$ due to Koml\'os, S\'ark\"ozy and Szemer\'{e}di~\cite{hajnal1970proof} and K\"uhn and Osthus~\cite{kuhn2009minimum}, for most cases we do not have good bounds even in the case $d=k-1$ (see the survey by~\cite{zhao2016recent} and references therein).

%%%%%%%%%%%%%%%%%%%%%%%%%%%%%%%%%%%%%%%%%%%%%%%%%%%%%%%%%%%%%%%%%%%%%%%%%%%%%%%%%%%%%%%%%%%%%%%%%%%%%%%%%%%%%%%%%%%%%%%%%%%%%%%%%%%%%%%%%%%%%%%%%%%%%
%%%%%%%%%%%%%%%%%%%%%%%%%%%%%%%%%%%%%%%%%%%%%%%%%%%%%%%%%%%%%%%%%%%%%%%%%%%%%%%%%%%%%%%%%%%%%%%%%%%%%%%%%%%%%%%%%%%%%%%%%%%%%%%%%%%%%%%%%%%%%%%%%%%%%
%%%%%%%%%%%%%%%%%%%%%%%%%%%%%%%%%%%%%%%%%%%%%%%%%%%%%%%%%%%%%%%%%%%%%%%%%%%%%%%%%%%%%%%%%%%%%%%%%%%%%%%%%%%%%%%%%%%%%%%%%%%%%%%%%%%%%%%%%%%%%%%%%%%%%
%%%%%%%%%%%%%%%%%%%%%%%%%%%%%%%%%%%%%%%%%%%%%%%%%%%%%%%%%%%%%%%%%%%%%%%%%%%%%%%%%%%%%%%%%%%%%%%%%%%%%%%%%%%%%%%%%%%%%%%%%%%%%%%%%%%%%%%%%%%%%%%%%%%%%

\subsection{Counting spanning hypergraphs}\label{sec:counting}
As in Cuckler and Kahn's work on Hamilton cycles~\cite{cuckler2009hamiltonian}, it is reasonable to believe that an $n$-vertex $k$-graph $H$ with $\delta(H)\ge\delta n$, for $(k-\ell)\mid n$ and $\delta >\delta_{k,\ell}$, should contain at least
\begin{equation}\label{eq:counting}(1-o(1))^n\cdot \Psi_{k,\ell}(n,\delta)\end{equation}
distinct Hamilton $\ell$-cycles, where $\Psi_{k,\ell}(n,\delta)$ denotes the expected number of distinct Hamilton $\ell$-cycles in the binomial random $k$-graph on $n$ vertices with edge probability $\delta$. In 2016, Ferber, Krivelevich, and Sudakov~\cite{ferber2016counting} showed that the lower bound~\eqref{eq:counting} is correct for every $\delta>1/2$ and $1\le \ell\le k/2$, and asked if this can be extended to $1\le \ell\le k-1$ and $\delta >\delta_{k,\ell}$. This was partially answered by Glock, Gould, Joos, K\"uhn, and Osthus~\cite{glock2021counting}, who showed that for every $\delta >1/2$ and $1\le \ell \le k-1$, the number of distinct Hamilton $\ell$-cycles is $\exp(n\log n-\Theta(n))$, which is tight up to the $\Theta(n)$ error term in the exponent. This result was recently improved by Ferber, Hardiman, and Mond~\cite{ferber2021counting}, who proved that the lower bound~\eqref{eq:counting} holds for every $\delta >1/2$ and $1\le \ell\le k-2$, thus settling the problem for every $1\le \ell\le k-2$ such that $(k-\ell)\mid k$. Our contribution is to use the simple method outlined above to get a bound matching that in~\cite{glock2021counting} that holds for any $\delta>\delta_{k,\ell}$ (with $\delta_{k,\ell}$ as defined in~\eqref{eq:deltakl}), thus giving a new result when $(k-\ell)\nmid k$ and $\ell>k/2$, as follows.

\begin{theorem}\label{mainthm} For each $k\geq 2$, $1\leq \ell\leq k-1$ and $\gamma>0$, there exist $n_0$ and $C$ such that the following holds for any $n\ge n_0$ with $(k-\ell)\mid n$. Any $n$-vertex $k$-graph $H$ with $\delta(H)\geq (\delta_{k,\ell}+\gamma)n$ (see~\eqref{eq:deltakl}) contains at least $\exp(n\log n-Cn)$ distinct Hamilton $\ell$-cycles.
\end{theorem}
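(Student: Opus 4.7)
The plan is to follow the strategy outlined in the introduction. Fix a small constant $\mu = \mu(\gamma, k, \ell) > 0$ with $\mu \ll \gamma^2$, set $r := \lceil \mu n \rceil$, and partition $V(H) = V_1 \cup \cdots \cup V_r$ by assigning each vertex independently and uniformly to one of the $r$ labels. Call the partition \emph{good} if (a) $|V_i| = (1+o(1)) n/r$ for every $i$, (b) $\delta(H[V_i]) \geq (\delta_{k,\ell} + \gamma/2) |V_i|$ for every $i$, and (c) for each $i$ one can find a short ``bridge'' $\ell$-path transitioning from $V_i$ to $V_{i+1}$ using only $O(1)$ vertices of each of those two parts, with all bridges pairwise vertex-disjoint. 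Given a good partition, a variant of \cref{theorem:Dirac} for Hamilton $\ell$-paths with prescribed endpoint structures produces a Hamilton $\ell$-path through the remaining vertices of each $H[V_i]$ whose two ends match the adjacent bridges; concatenating these yields a Hamilton $\ell$-cycle visiting $V_1, \ldots, V_r$ in cyclic order.

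The count then follows from two simple observations. There are $r^n = \exp(n \log n - \Theta(n))$ ordered partitions of $V(H)$ into $r$ labelled parts, and each Hamilton $\ell$-cycle of $H$ is consistent with at most $n \binom{n}{r} = e^{O(n)}$ such partitions (a cycle-respecting partition is determined by a starting vertex for the arc $V_1$ together with a choice of $r$ arc lengths). Thus the number of distinct Hamilton $\ell$-cycles is at least $(\#\text{ good partitions})\,/\,e^{O(n)}$, and it suffices to show that at least an $e^{-C_0 n}$ fraction of partitions are good in order to obtain the desired $\exp(n \log n - Cn)$ bound.

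The main technical step, and the main obstacle, is proving this $e^{-C_0 n}$ bound on the probability of a good partition, chiefly for property (b). The heuristic is that for each $v \in V_i$, Chernoff gives $\Pr[v \text{ bad in } V_i] \leq \exp(-\Omega(\gamma^2 |V_i|)) = \exp(-\Omega(\gamma^2/\mu))$, an arbitrarily small constant by choice of $\mu$; if these events were independent across all vertices, their joint success probability would be at least $\exp(-n \cdot \exp(-\Omega(\gamma^2/\mu)))$, easily bigger than $e^{-C_0 n}$. The difficulty is controlling the dependencies between codegree-loss events across different parts, and I would follow the iterative partitioning scheme inspired by Barber--Lo--K\"uhn--Osthus~\cite{barber2016edge} that the authors point to: recursively split each current part into two, verifying at each level via Chernoff plus union bound that the slackness in the codegree condition is preserved up to an $o(1)$ additive loss (which is valid with failure probability $o(1)$ while the current part size is $\omega(\log n)$), and then handle the last few levels down to parts of size $1/\mu = O(1)$ via a direct argument exploiting the tree structure of the random choices to bound the joint failure probability by $e^{-C_0 n}$. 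Property (a) is an easy Chernoff on part sizes, and property (c) follows from $\delta(H) > \delta_{k,\ell} n$ via a standard connecting-lemma argument producing many candidate bridges, from which disjoint ones can be chosen greedily.
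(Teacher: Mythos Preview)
Your outline is the paper's strategy, and both the counting step and the iterative-bisection idea are correct in spirit. Two technical points, however, do not go through as written.

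First, your property (b) only controls $(k-1)$-sets inside a single part $V_i$, but the connecting step needs more: your bridges in (c) must live inside $V_i\cup V_{i+1}$, and a connecting lemma applied with the \emph{global} bound $\delta(H)\ge(\delta_{k,\ell}+\gamma)n$ gives no control over where the internal vertices of the connecting path land. What is needed is that every $(k-1)$-set $U\subset V_{i-1}\cup V_i\cup V_{i+1}$ satisfies $d(U,V_i)\ge(\delta_{k,\ell}+\gamma/2)|V_i|$; this stronger property is exactly what the paper tracks through the bisection (its condition \ref{P2}). With it in hand the paper also dispenses with separate bridges: it picks an arbitrary $\ell$-tuple $\mathbf v_i\in(V_i)_\ell$ for each $i$ and applies Hamilton $\ell$-path connectivity (Lemma~\ref{lemma:connected}) directly to $H[V_i\cup\{\text{entries of }\mathbf v_{i-1}\}]$, which inherits the required codegree from \ref{P2}.

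Second, a Hamilton $\ell$-path on $m$ vertices requires $(k-\ell)\mid(m-\ell)$, so the part sizes must be controlled modulo $k-\ell$; independent uniform assignment does not deliver this. The paper fixes the sizes $n_1,\dots,n_r$ deterministically with $(k-\ell)\mid n_i$ and $m\le n_i\le 5m$, then samples a uniformly random partition with those exact sizes (analysed via hypergeometric rather than binomial concentration). Finally, a minor simplification: the paper's bisection analysis is uniform over all levels, showing $\Pr(E_i\mid E_{i-1})\ge e^{-r_{i-1}}$ by conditional independence of the $r_{i-1}$ splits at level $i$, and then $\sum_i r_{i-1}<r\le n$ gives the $e^{-n}$ bound directly, so no separate treatment of the final levels is needed.
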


No previous bounds on the count of powers of Hamilton tight cycles in hypergraphs with large codegree have been shown (including in graphs), and here we use our technique to similarly get a bound tight up to $\Theta(n)$ error term in the exponent, as follows.

\begin{theorem}\label{thm:powers} For each $t\ge k\ge 2$ and $\gamma>0$, there exist $n_0$ and $C$ such that the following holds for any $n\ge n_0$. Any $n$-vertex $k$-graph $H$  satisfying~\eqref{eq:cliques} contains at least $\exp(n\log n-Cn)$ distinct copies of the $(t-k+1)$th power of a Hamilton tight cycle.
\end{theorem}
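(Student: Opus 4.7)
The plan is to follow the general scheme described in the introduction, adapting it from Hamilton cycles in graphs to the $(t-k+1)$th power of a tight Hamilton cycle in hypergraphs. Fix parameters $1/n \ll 1/C \ll \mu \ll \gamma$, set $r = \mu n$ (rounded so that $s := n/r$ is an integer), and partition $V(H)$ uniformly at random into $r$ equal-sized parts $V_1,\ldots,V_r$. I aim to show that with probability at least $e^{-n}$ the partition is \emph{nice}, meaning that each induced $k$-graph $H_i := H[V_i]$ still satisfies the hypothesis~\eqref{eq:cliques} with error $\gamma/2$ in place of $\gamma$, and between every consecutive pair $(V_i,V_{i+1})$ (cyclically) there exists a \emph{connector}: an ordered sequence of $O(t)$ vertices straddling the boundary whose every $t$ consecutive vertices form an edge of $H$, and whose initial $(t-1)$-tuple lies in $V_i$ while its terminal $(t-1)$-tuple lies in $V_{i+1}$.

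Given a nice partition, I would apply the result of Pavez-Sign\'e, Sanhueza-Matamala and Stein~\cite{pavez2021towards} encoded in~\eqref{eq:cliques} inside each $H_i$ (after removing the $O(t)$ vertices contributed by the two adjacent connectors) to find a spanning $(t-k+1)$th power of a tight path in $H_i$ whose prescribed endpoint $(t-1)$-tuples match those of the two bordering connectors. Such a path version with prescribed endpoints is obtained from the cycle version by the standard trick of attaching a short auxiliary power-path linking the two desired endpoint tuples, finding the resulting power of a tight cycle, and then deleting the auxiliary piece. Concatenating the $r$ power-path pieces with the $r$ connectors yields one $(t-k+1)$th power of a Hamilton tight cycle in $H$ visiting $V_1,\ldots,V_r$ in cyclic order.

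The main obstacle is verifying that a uniformly random partition preserves the minimum codegree of every $H_i$ simultaneously with probability at least $e^{-n}$; a direct union bound fails because each part has only constant size $s = 1/\mu$. Following the hint in the introduction, I would use an iterative partitioning argument inspired by Barber, Lo, K\"uhn and Osthus~\cite{barber2016edge}: repeatedly split each current part into two balanced halves over $\log_2 r$ rounds, and at each round apply a Chernoff bound (valid since within one split the codegree inside a half is hypergeometrically distributed) to show that both halves retain the desired codegree bound with a slightly reduced margin $\gamma_j$, where $\gamma_0 = \gamma$ and $\gamma_{\log_2 r} \ge \gamma/2$. The compound success probability of at least $e^{-n}$ over all $r-1$ splits is achievable because the single-split success probability is bounded below by a constant depending only on $\gamma$, and $\mu \ll \gamma$. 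Once the codegree is controlled, producing the connectors becomes a straightforward greedy extension using the codegree across $V_i \cup V_{i+1}$ to pick one vertex at a time for a constant number of steps.

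For the count, the number of ordered partitions of $V(H)$ into $r$ blocks of size $s$ is $\binom{n}{s,\ldots,s} = \exp(n \log n - O(n))$ (with the implicit constant depending only on $\mu$), so the number of nice partitions is at least $\exp(n\log n - O(n))$. On the other hand, each fixed $(t-k+1)$th power of a Hamilton tight cycle in $H$, together with a choice of starting vertex on its cyclic ordering, determines a unique ordered partition into $r$ consecutive arcs of size $s$, so any such cycle is compatible with at most $n$ nice partitions. Dividing gives the desired lower bound of $\exp(n\log n - Cn)$ distinct copies.
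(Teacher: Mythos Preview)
Your overall scheme matches the paper's: random partition into constant-size pieces via iterative halving, good partitions with probability at least $e^{-n}$, a power of a tight path through each piece with matching endpoints, and the double count at the end. The connectors you build are more elaborate than needed (the paper simply picks one $(t-1)$-clique inside each $V_i$ via Proposition~\ref{prop:endfinder} and applies path-connectivity to $H[V_i\cup\{v_{i-1,1},\ldots,v_{i-1,t-1}\}]$), but that is a matter of taste.

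The genuine gap is the step you call the ``standard trick'': deducing a Hamilton $(t-k+1)$th power of a tight path with \emph{prescribed} end $(t-1)$-tuples from the cycle theorem by attaching a short auxiliary power-path between the two tuples, finding a power of a Hamilton tight cycle in the augmented hypergraph, and deleting the auxiliary piece. This does not work. If the auxiliary vertices are new, every $(k-1)$-set meeting them has tiny codegree, so the augmented hypergraph violates \eqref{eq:cliques} and the cycle theorem does not apply. If instead the auxiliary segment uses existing vertices of $H$, there is no reason the power of a Hamilton tight cycle you find must traverse that segment contiguously and in the right direction, so deleting it will not in general leave a path with the desired ends. (Even in graphs this reduction fails: adding a vertex adjacent only to the two prescribed endpoints destroys the Dirac condition.) The paper handles exactly this point with a separate statement, Lemma~\ref{lemma:connected:cliques}: under \eqref{eq:cliques}, for any two disjoint ordered $(t-1)$-tuples spanning $(t-1)$-cliques there is a Hamilton tight path in $K_t(H)$ with those tuples as ends. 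This is not a formal corollary of the cycle result; its proof reopens the absorption machinery of~\cite{pavez2021towards} (absorbing path, reservoir, connecting lemma, almost-spanning path, final absorption). You should replace the ``standard trick'' by an appeal to such a Hamilton-path-connected lemma and be prepared to justify it along these lines.

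A smaller point: your margin schedule $\gamma_j$ with $\gamma_{\log_2 r}\ge\gamma/2$ is inconsistent with ``single-split success probability bounded below by a constant depending only on $\gamma$'', since $\log_2 r\to\infty$ with $n$ while a fixed per-round loss would exhaust $\gamma/2$ after $O(1)$ rounds. The paper's remedy is to make the loss at a set of current size $m'$ be of order $m'^{-1/4}$; these losses telescope (halving $m'$ increases $m'^{-1/4}$ by a fixed factor below $1$ in the relevant inequality) and the single-split success probability stays uniformly at least $e^{-1}$.
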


Finally, we consider counting $F$-factors in dense hypergraphs. When $F$ is a single edge and $k|n$, the number of $F$-factors in a $k$-graph $H$ with $n\geq 3k$ vertices is equal to the number of Hamilton 0-cycles multiplied by a factor of $((n/k)-1)!/2$. Thus, from the results of Ferber, Krivelevich, and Sudakov~\cite{ferber2016counting} described above, if $k\mid n$ then any $n$-vertex $k$-graph $H$ with $\delta(H)\geq \delta n$ has at least $(1-o(1))^n \Psi_{k,\ell}(n,\delta)/(n/k)!$ $F$-factors (with $\Psi_{k,\ell}(n,\delta)$ as defined in~\eqref{eq:counting}).
In each case as part of wider work, for $1\le d\le k-1$,  Kang, Kelly, K\"uhn, Osthus and Pfenninger~\cite{kang2022perfect} and Pham, Sah, Sawhney and Simkin~\cite{pham2023toolkit} showed that if $F$ is again a single edge, and $k\mid n$, then any $n$-vertex $k$-graph $H$ with $\delta_d(H)\geq(\mu_{k,d}(F)+\gamma)\binom{n}{k-d}$ contains at least $c^{-n}\exp((1-1/k)n\log n)$ $F$-factors where $c\ll\gamma$ is fixed and this result is tight up to the constant $c$.
When $d=k-1$, Kang, Kelly, K\"uhn, Osthus, and Pfenninger~\cite{kang2022perfect} even managed to remove the error term in the minimum degree condition.

In the graph case ($k=2$), Pham, Sah, Sawhney, and Simkin~\cite{pham2023toolkit} very recently showed that if $F=K_t$ is the $t$-vertex complete graph, and $t\mid n$, then any $n$-vertex graph $G$ with $\delta(G)\geq (1-1/t)n$ contains $c^{-n}\exp((1-1/t)n\log n)$ $F$-factors for some fixed constant $c$. The degree bound here is the famously tight Hajnal-Szemer\'edi bound from~\cite{hajnal1970proof} and the bound on the number of $F$-factors is tight up to the constant $c$. This proved a recent conjecture of  Allen, B\"ottcher, Corsten, Davies, Jenssen, Morris, Roberts, and Skokan~\cite{allen2022robust}.

Here, our contribution again is to apply our methods to easily match these weaker bounds of $c^{-n}\exp((1-1/|F|)n\log n)$ under the stronger approximate minimum degree condition, but to do this for $F$-factors for any fixed graph $F$ and all degree bounds, as follows.

\begin{theorem}\label{thm:factors}For each $k\ge 2$, $1\le d\le k-1$, and each $k$-graph $F$ on $t\ge k$ vertices, there exists $n_0$ and $C$ such that the following holds for any $n\ge n_0$ with $t\mid n$. Any $n$-vertex $k$-graph $H$ with $\delta_d(H)\ge (\mu_{k,d}(F)+\gamma)\binom{n}{k-d}$ contains at least $\exp\big((1-\frac{1}{t})n\log n-Cn\big)$ distinct $F$-factors.
\end{theorem}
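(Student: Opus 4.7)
The plan is to construct many $F$-factors of $H$ via a random partition paired with a double-counting argument, in the spirit of the Hamilton-cycle sketch given in the introduction. Let $n_0'$ be the threshold coming from the definition of $\mu_{k,d}(F)$ when applied with error term $\gamma/2$, and fix an integer $M$ that is a multiple of $t$ and at least $n_0'$; after a standard adjustment allowing slightly uneven part sizes, we may assume $M\mid n$. Set $r=n/M$ and take a uniformly random ordered partition $V(H)=V_1\cup\cdots\cup V_r$ into parts of size $M$. If every $H[V_i]$ satisfies $\delta_d(H[V_i])\ge(\mu_{k,d}(F)+\gamma/2)\binom{M}{k-d}$, then by the definition of $\mu_{k,d}(F)$ each $H[V_i]$ contains an $F$-factor $\mathcal{F}_i$, and $\bigcup_i\mathcal{F}_i$ is an $F$-factor of $H$ with every copy of $F$ lying entirely inside a single part.

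The core step is to prove that the above event holds with probability at least some constant $p>0$. A one-shot Chernoff bound is not strong enough here, since the part size $M$ is a fixed constant while there are $\Theta(n)$ parts. Instead I would use an iterative halving argument, following the strategy sketched for Hamilton cycles and inspired by iterative absorption. Halve $h=\lceil\log_2 r\rceil=\Theta(\log n)$ times: at each halving of a current piece $W$ of size $m$, for every $d$-subset $T\subseteq W$, the degree $d_{H[W']}(T)$ in a uniformly random half $W'$ concentrates around $d_{H[W]}(T)/2^{k-d}$ by a hypergeometric Chernoff bound, with relative deviation $o(1)$ whenever $m$ is large. Budgeting a slack loss of $\gamma/(2h)$ per round and union bounding over the $O(n^d)$ choices of $T$ and the $O(n)$ halving steps makes the total failure probability $o(1)$.

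To count, let $N$ be the number of $F$-factors of $H$ and double count pairs $(\mathcal{P},\mathcal{F})$, where $\mathcal{P}$ is an ordered partition of $V(H)$ into $r$ parts of size $M$ and $\mathcal{F}$ is an $F$-factor with every copy of $F$ inside a single part of $\mathcal{P}$. The number of ordered partitions is $n!/(M!)^r$, and at least a $p$-fraction of them yield at least one such $\mathcal{F}$. On the other hand, any fixed $\mathcal{F}$ is compatible with exactly $(n/t)!/((M/t)!)^r$ partitions, namely the ways to distribute the $n/t$ copies of $F$ into the $r$ groups of size $M/t$. Combining these counts and applying Stirling,
\[
N\ \ge\ p\cdot\frac{n!\,((M/t)!)^r}{(M!)^r\,(n/t)!}\ =\ \exp\bigl((1-1/t)\,n\log n-O(n)\bigr),
\]
which gives the theorem with $C$ a constant depending only on $M,t,k,d,\gamma$.

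The main obstacle is the concentration step: the parts at the final scale are too small for a single Chernoff bound to beat the union bound over $\Theta(n)$ parts, so the random partition must be built gradually using the stronger concentration available at intermediate scales. The remaining technicalities—choosing $M$ so that divisibility by $t$ is preserved throughout the halving, and verifying that the per-round slack $\gamma/(2h)$ comfortably exceeds the typical deviation—are routine.
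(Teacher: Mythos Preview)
Your overall framework---randomly partition $V(H)$ into constant-size blocks, find an $F$-factor inside each block, and double count pairs (partition, factor)---is exactly the paper's approach, and your final count is even a little tighter than the paper's (which bounds the number of compatible partitions by the cruder $(n/t)^{n/t}$ rather than your exact $(n/t)!/((M/t)!)^r$).

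The genuine gap is the probability estimate. You assert that the event ``every $H[V_i]$ has $\delta_d\ge(\mu_{k,d}(F)+\gamma/2)\binom{M}{k-d}$'' holds with some constant probability $p>0$, by budgeting a slack of $\gamma/(2h)$ per halving round and then union bounding over all $d$-sets and all halving steps. This breaks at the last rounds. Once a piece $W$ has size $\Theta(M)$, a hypergeometric or McDiarmid bound on $d_{H[W']}(T)$ for a random half $W'$ only controls relative deviations of order $M^{-c}$ for some fixed $c>0$, not of order $1/\log n$; in particular the per-piece failure probability at the final level is a positive constant depending on $M$ and $\gamma$, not $o(1/n)$. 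Since there are $\Theta(n)$ pieces at that level and, conditional on the previous level, the final halvings are independent, the success probability is $\exp(-\Theta(n))$ for typical $H$, not $1-o(1)$.

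The paper accepts this and only claims success probability at least $e^{-n}$ (Lemma~2.5). Its iterative halving uses a scale-dependent slack $2m^{-1/4}$, which \emph{increases} as $m$ shrinks so that the total loss telescopes to $O(M^{-1/4})\le\gamma/2$; at each level one then shows that, conditional on the previous level being good, each new piece is good with probability at least $e^{-1}$, and multiplies over the $O(n)$ pieces to get $e^{-n}$. Plugging $p=e^{-n}$ in place of a constant into your displayed inequality still gives
\[
N\ \ge\ e^{-n}\cdot\frac{n!\,((M/t)!)^r}{(M!)^r\,(n/t)!}\ =\ \exp\bigl((1-1/t)\,n\log n-O(n)\bigr),
\]
so the architecture of your proof is right; what needs to change is (i) replace the uniform $\gamma/(2h)$ budget by a geometric, scale-dependent one, and (ii) downgrade the probability claim from ``constant'' to ``at least $e^{-n}$''.
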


\section{Proofs}
A $k$-graph $H$ has vertex set $V(H)$ and edge set $E(H)$, and $|H|=|V(H)|$. For any $U,S\subset V(H)$ with $|U|\le k-1$,  $d(U,S)$ is the degree of $U$ in $S$, i.e., the number of edges of $H$ containing $U$ whose vertices not in $U$ are all in $S$. The hypergraph $H[S]$ induced by $S\subset V(H)$ has vertex set $S$ and edge set consisting of all those edges in $H$ contained in $S$.
%vertices $s_1,\ldots, s_{k-|U|}\in S$ such that $U\cup \{s_1,\ldots, s_{k-|U|}\}\in E(H)$.
For a set $X$ and $1\le \ell \le |X|$, let $\binom{X}{\ell}$ denote the collection of subsets of $X$ of size $\ell$, and let $(X)_\ell$ denote the set of tuples $\mathbf x=(x_1,\ldots,x_\ell)\in X^\ell$ of distinct elements in $X$. We will use bold letters to denote elements from $(X)_{\ell}$ or $X^\ell$. For $a,b\in(0,1]$, we will write $a\ll b$ to denote that, given $b$, we can choose $a$ sufficiently small so that the subsequent statements hold.

%%%%%%%%%%%%%%%%%%%%%%%%%%%%%%%%%%%%%%%%%%%%%%%%%%%%%%%%%%%%%%%%%%%%%%%%%%%%%%%%%%%%%%%%%%%%%%%%%%%%%%%%%%%%%%%%%%%%%%%%%%%%%%%%%%%%%%%%%%%%%%%%%%%%%
%%%%%%%%%%%%%%%%%%%%%%%%%%%%%%%%%%%%%%%%%%%%%%%%%%%%%%%%%%%%%%%%%%%%%%%%%%%%%%%%%%%%%%%%%%%%%%%%%%%%%%%%%%%%%%%%%%%%%%%%%%%%%%%%%%%%%%%%%%%%%%%%%%%%%
%%%%%%%%%%%%%%%%%%%%%%%%%%%%%%%%%%%%%%%%%%%%%%%%%%%%%%%%%%%%%%%%%%%%%%%%%%%%%%%%%%%%%%%%%%%%%%%%%%%%%%%%%%%%%%%%%%%%%%%%%%%%%%%%%%%%%%%%%%%%%%%%%%%%%
%%%%%%%%%%%%%%%%%%%%%%%%%%%%%%%%%%%%%%%%%%%%%%%%%%%%%%%%%%%%%%%%%%%%%%%%%%%%%%%%%%%%%%%%%%%%%%%%%%%%%%%%%%%%%%%%%%%%%%%%%%%%%%%%%%%%%%%%%%%%%%%%%%%%%

\subsection{Our main partitioning lemma}
Here we prove our main lemma, showing that in any linear (in $|H|$) minimum degree hypergraph $H$ there are many partitions of $V(H)$ into sets with chosen sizes whose induced subgraphs from $H$ have high minimum degree relative to their sizes (see Lemma~\ref{lemma:partition}). We need a slightly stronger condition to connect the subgraphs then found in these induced subgraphs, which motivates the following definition of a \textit{good partition}.
\begin{definition}Let $k\ge 2$, $\delta\in [0,1]$ and $\mathbf{n}=(n_1,\dots, n_r)\in\mathbb N^r$. For an $n$-vertex $k$-graph $H$, we say that a partition $V(H)=V_1\cup \ldots\cup V_r$  is \textit{$(\mathbf{n},\delta)$-good} if (working modulo $r$ in the indices) we have
    \begin{enumerate}[label = \textbf{P\arabic{enumi}}]
        \item $|V_i|=n_i$ for each $i\in [r]$, and\label{P1}
        \item for each $i\in [r]$ and $U\subseteq V_{i-1}\cup V_i\cup V_{i+1}$ with $|U|=k-1$, $d(U,V_i)\ge \delta|V_i|$.\label{P2}
    \end{enumerate}\end{definition}

To find many good partitions, we will choose an appropriate distribution $\mathbf{n}$ for the size of the subsets and partition $V(H)=V_1\cup\ldots\cup V_r$ uniformly at random subject to \ref{P1}. We then show (for the parameters we use, and with $n$ large) that \ref{P2} holds with probability at least $e^{-n}$, a relatively small probability but still enough to show that many partitions are good. For any fixed $i\in [r]$, \ref{P2} will hold with constant probability, and the only difficulty here is to show that (despite many dependencies) this is true for all $i\in [r]$ with probability at least $e^{-n}$. To do this, we form the random partition iteratively, each time dividing the subsets in two, and tracking how the minimum degree condition (or more precisely something akin to \ref{P2}) changes in the subgraphs induced on the sets. This is inspired by part of the analysis in the work by Barber, Lo, K\"uhn and Osthus~\cite{barber2016edge} introducing iterative absorption, and we use some similar calculations to those in~\cite{barber2016edge}.

For our analysis, we need the following standard concentration result for hypergeometric random variables (see, e.g.,~\cite{JLR2000} for the standard definition of such a variable with parameters $N$, $n$ and $m$).
\begin{theorem}[see, e.g., Theorem 2.10 in~\cite{JLR2000}]\label{hypergeom}
Let $X$ be a hypergeometric random variable with parameters $N$, $n$ and $m$. Then, for any $t>0$,
\[
\P(|X-\E X|\geq t)\leq 2e^{-2t^2/n}.
\]
\end{theorem}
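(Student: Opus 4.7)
The plan is to reduce the hypergeometric random variable $X$ to a binomial one, and then apply a standard Chernoff--Hoeffding bound. Write $X = Z_1 + \dots + Z_n$, where $Z_i \in \{0,1\}$ indicates whether the $i$th draw is a success; then each $Z_i$ has the $\mathrm{Bernoulli}(m/N)$ distribution, so $\E X = nm/N$, but the $Z_i$ are negatively dependent rather than independent because the sampling is without replacement. The central ingredient I would invoke is Hoeffding's classical convex-order comparison: for every convex function $\phi\colon \mathbb{R}\to\mathbb{R}$,
\[
\E\phi(X)\le \E\phi(Y),\qquad Y\sim \mathrm{Binomial}(n,m/N).
\]
Taking $\phi(x)=e^{\lambda x}$ shows that the moment generating function of $X$ is pointwise dominated by that of $Y$.

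Granted this MGF domination, the proof proceeds by a routine Cram\'er--Chernoff argument. For $\lambda>0$,
\[
\P(X-\E X\ge t)\le e^{-\lambda t}\E e^{\lambda(X-\E X)}\le e^{-\lambda t}\E e^{\lambda(Y-\E Y)}.
\]
Since $Y-\E Y$ is a sum of $n$ independent centred Bernoulli variables, each supported in an interval of length $1$, Hoeffding's lemma yields $\E e^{\lambda(Y-\E Y)}\le e^{\lambda^2 n/8}$. Optimising in $\lambda$ (the minimum is attained at $\lambda=4t/n$) gives the upper tail bound $\P(X-\E X\ge t)\le e^{-2t^2/n}$. The lower tail is symmetric: either apply the same argument to $n-X$, which is hypergeometric with parameters $N$, $n$, $N-m$, or repeat the Chernoff step for $\lambda<0$. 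A union bound over the two tails produces the factor $2$ in the stated inequality.

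The main obstacle, and the only step that is not routine Chernoff bookkeeping, is the convex-order domination of $X$ by $Y$. I would prove it by realising $X$ as $\sum_{j=1}^N \xi_j\mathbf{1}[\pi(j)\le n]$, where $\pi$ is a uniform random permutation of $[N]$ and $\xi_1,\dots,\xi_N\in\{0,1\}$ are the fixed population labels with $\sum_j \xi_j=m$; an exchangeability and averaging-over-orderings argument (as in Hoeffding's original proof) then shows that $\E e^{\lambda X}$ is a Schur-concave function of $(\xi_1/N,\dots,\xi_N/N)$, so its value is bounded above by the value at the equalised vector $(m/N,\dots,m/N)$, which is precisely the binomial MGF $\E e^{\lambda Y}$. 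With that single ingredient in hand, everything else falls out of the standard Chernoff machinery.
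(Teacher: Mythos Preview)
The paper does not give its own proof of this statement; it is quoted as Theorem~2.10 of Janson--\L uczak--Ruci\'nski, which in turn derives it from Hoeffding's 1963 comparison between sampling with and without replacement. Your overall plan---dominate the hypergeometric moment generating function by the binomial one via Hoeffding's convex-order inequality, then run the standard Cram\'er--Chernoff optimisation---is precisely the argument in that reference, and the Chernoff bookkeeping you wrote out (Hoeffding's lemma giving $e^{\lambda^2 n/8}$, optimisation at $\lambda=4t/n$, symmetry for the lower tail, union bound) is correct.

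The genuine gap is in your last paragraph, where you try to justify the convex-order domination by a Schur-concavity argument. The function $g(\xi_1,\dots,\xi_N)=\E e^{\lambda X}$ with $X=\sum_{j}\xi_j\mathbf 1[\pi(j)\le n]$ is Schur-\emph{convex}, not Schur-concave: already for $N=2$, $n=1$ it equals $\tfrac12(e^{\lambda\xi_1}+e^{\lambda\xi_2})$, which increases as the pair is spread apart. Equalising the $\xi_j$ therefore produces a \emph{lower} bound, and in any case substituting $\xi_j\equiv m/N$ makes $X$ the constant $nm/N$, not a binomial variable, so the claim that the equalised value ``is precisely the binomial MGF'' is false. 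Hoeffding's actual proof of $\E\phi(X)\le\E\phi(Y)$ proceeds differently: one couples the with- and without-replacement samples on a common probability space (equivalently, observes that conditioning an i.i.d.\ sample of size $N$ on the event that it reproduces the population turns it into a random permutation) and then applies conditional Jensen. Replace your Schur argument with a citation to Hoeffding (1963, Theorem~4) or with that coupling, and the proof is complete.
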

We can now state and prove our key lemma, as follows.

\begin{lemma}\label{lemma:partition}Let $1\le \ell<k$ and let $1/n\ll 1/m\ll\delta,\gamma,1/k$ satisfy $(k-\ell)\mid n$. Then, there exists a tuple $\mathbf{n}=(n_1,\dots,n_r)$ with $\sum_{i\in [r]}n_i=n$ and, for each $i\in [r]$, $m\le n_i\le 5m$ and $(k-\ell)\mid n_i$, such that the following holds. If $H$ is an $n$-vertex $k$-graph with $\delta(H)\ge (\delta +\gamma)n$, then the number of $(\mathbf n,\delta+\gamma/2)$-good partitions of $V(H)$ is at least $e^{-n}\binom{n}{n_1,\ldots, n_r}$.
\end{lemma}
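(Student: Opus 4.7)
My plan is to construct good partitions by iterative random bisection, exploiting that at each level the splits are mutually independent given the earlier levels. First, I pick a tuple $\mathbf n=(n_1,\ldots,n_r)$ with each $n_i\in[m,5m]$ divisible by $k-\ell$ and summing to $n$ (possible since $(k-\ell)\mid n$ and $m$ is sufficiently large). Starting from $V^{(0)}:=V(H)$ as a single set, for $j=1,\ldots,J$ with $J\approx\log_2(n/m)$, I uniformly bisect each current set $V^{(j-1)}_\iota$ (of size $s_{j-1}\approx n/2^{j-1}$) into two halves $V^{(j)}_{2\iota-1},V^{(j)}_{2\iota}$ of size $s_j=s_{j-1}/2$, preserving a cyclic ordering; the final sets $V^{(J)}_1,\ldots,V^{(J)}_r$ become the parts of the partition.

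The invariant I track at level $j$ is the following: for each $V^{(j)}_i$ and each $(k-1)$-subset $U$ of the pair of consecutive level-$(j-1)$ sets that contains the $3$-neighbourhood $V^{(j)}_{i-1}\cup V^{(j)}_i\cup V^{(j)}_{i+1}$, we have $d(U,V^{(j)}_i)\geq\alpha_j s_j$, with $\alpha_j$ decreasing from $\alpha_0=\delta+\gamma$ to $\alpha_J=\delta+\gamma/2$. A short check (using that two cyclically-consecutive level-$j$ sets lie inside a unique pair of cyclically-consecutive level-$(j-1)$ sets) shows this implies property \ref{P2} at level $J$. The key feature of this formulation is that for each $V^{(j)}_i$, both the container of $U$ (depending only on level-$(j-1)$ sets, which are fixed before level $j$) and the conditional distribution of $V^{(j)}_i$ given level $j-1$ are determined by the single split of its parent $V^{(j-1)}_{\lceil i/2\rceil}$; hence the success events at level $j$ are mutually independent across the $R_{j-1}$ parents.

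For each parent and each relevant $U$, the inductive hypothesis at level $j-1$ provides $d(U,V^{(j-1)}_{\lceil i/2\rceil})\geq\alpha_{j-1}s_{j-1}$ (one checks that the pair containing $U$ at level $j$ is contained in the corresponding pair at level $j-1$), so $d(U,V^{(j)}_i)$ is hypergeometric with mean at least $\alpha_{j-1}s_j$; by \Cref{hypergeom} it falls below $\alpha_j s_j=\alpha_{j-1}s_j-\eta_j s_j$ (where $\eta_j:=\alpha_{j-1}-\alpha_j$) with probability at most $2e^{-2\eta_j^2 s_j}$. A union bound over the $O(s_{j-1}^{k-1})$ relevant $U$'s yields a per-split failure probability $q_j$, and by independence across parents,
\[
\P[\text{all levels succeed}]\;\geq\;\prod_{j=1}^{J}(1-q_j)^{R_{j-1}}.
\]

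The main obstacle is the balancing act in choosing the decrements $\eta_j$: I need $\sum_j\eta_j\leq\gamma/2$ so that the invariant survives to level $J$, and simultaneously $2\sum_j R_{j-1}q_j\leq n$ so that the product above is at least $e^{-n}$. Taking $\eta_j$ of order $\sqrt{\log s_{j-1}/s_j}$ with a small prefactor (so that $2\eta_j^2 s_j$ dominates $(k-1)\log s_{j-1}$ by a constant) makes $q_j$ polynomially small in $s_{j-1}$, and the geometric decay of the $s_j$'s lets both sums be bounded by quantities depending only on $m,\gamma$, and $k$; taking $m$ large compared to these parameters then forces both constraints to hold. It follows that the number of good partitions of $V(H)$ with sizes $\mathbf n$ is at least $\P[\text{all levels succeed}]\cdot\binom{n}{n_1,\ldots,n_r}\geq e^{-n}\binom{n}{n_1,\ldots,n_r}$, as required.
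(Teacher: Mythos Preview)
Your proposal is correct and follows essentially the same approach as the paper: iterative random bisection, tracking a strengthened degree invariant (with $U$ ranging over a set determined at the previous level so that the per-parent events are conditionally independent), hypergeometric concentration plus a union bound, and multiplying the per-level success probabilities to obtain at least $e^{-n}$. The paper tracks the cumulative error as $2m_{i,j}^{-1/4}$ rather than summing explicit decrements $\eta_j$, and its auxiliary event $F_{i-1,j}$ lets $U$ range over the $3$-neighbourhood of the parent rather than a consecutive pair, but these are cosmetic differences. One small point to tidy up: you write $s_j=s_{j-1}/2$ and speak of exact halving, but to realise a prescribed tuple $(n_1,\ldots,n_r)$ with $(k-\ell)\mid n_i$ you must allow slightly uneven splits (as the paper does via $m_{i,j}=\sum n_{i'}$); this is routine and does not affect your argument.
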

\begin{proof} Let $s$ satisfy $2m\leq n/2^s< 4m$ and let $r=2^s$. Since $(k-\ell)\mid n$, we can choose integers $n_i$, $i\in [r]$, so that $m\leq n_i\leq 5m$ and $(k-\ell)|n_i$, for each $i\in [r]$,  $\sum_{i\in [r]}n_i=n$,
and $|n_i-n_j|\le 2k$ for all $1\le i<j\le r$. We will show that the lemma holds with $\mathbf{n}:=(n_1,\ldots,n_r)$, so let $H$ be any $n$-vertex $k$-graph with $\delta(H)\geq (\delta+\gamma)n$.

We start by iteratively partitioning $V(H)$ in 2 at random. For each $0\leq i\leq s$, let $r_i=2^i$, and, for each $0\leq i\leq s$ and $j\in [r_i]$, let
\begin{equation}\label{eqn:mij}
m_{i,j}=\sum_{i'=(j-1)\cdot 2^{s-i}+1}^{j\cdot 2^{s-i}}n_{i'}.
\end{equation}
Let $V_{0,1}=V(H)$. Iteratively, do the following for each $i\in [s]$. For each $j\in [r_{i-1}]$, uniformly at random divide $V_{i-1,j}$ into two sets $V_{i,2j-1}$ and $V_{i,2j}$ so that $|V_{i,2j-1}|=m_{i,2j-1}$ and $|V_{i,2j}|=m_{i,2j}$, noting that
\begin{align*}
|V_{i-1,j}|&=m_{i-1,j}=\sum_{i'=(j-1)\cdot 2^{s-i+1}+1}^{j\cdot 2^{s-i+1}}n_{i'}
=\sum_{i'=(2j-2)\cdot 2^{s-i}+1}^{(2j-1)\cdot 2^{s-i+1}}n_{i'}+\sum_{i'=(2j-1)\cdot 2^{s-i}+1}^{2j\cdot 2^{s-i+1}}n_{i'}=m_{i,{2j-1}}+m_{i,2j}.
\end{align*}
Note that this process ends with the partition $V(H)=V_{s,1}\cup V_{s,2}\cup \ldots\cup V_{s,r}$ with $|V_{s,i}|=m_{s,i}=n_i$ for each $i\in [r]$, whose distribution is that of a partition of $V(H)$ chosen uniformly at random subject to these set sizes.

Now, for each $0\leq i\leq s$ and $j\in [r_{i}]$, let $E_{i,j}$ be the event where, for every $U\subset V_{i,j-1}\cup V_{i,j}\cup V_{i,j+1}$ (with, as in later occurrences,  addition modulo $r_i$ in the second subscript), if $|U|=k-1$, then
  \begin{equation}\label{eq:degree}
  d(U,V_{i,j})\geq \Big(\delta+\gamma-2m_{i,j}^{-1/4}\Big)m_{i,j}.
  \end{equation}
For each $0\leq i\leq s$, let $E_i$ be the event that $E_{i,j}$ holds for all $j\in [r_i]$, noting that $E_0$ holds because $\delta(H)\geq (\delta+\gamma)n$.
We will now show that the lemma is implied by the following claim.
\begin{claim}For each $i\in [s]$, $\P(E_i|E_{i-1})\geq \exp(-r_{i-1})$.\label{claim}\end{claim}
%Let $E$ be the event that $E_{i,j}$ holds for all $0\leq i\leq s$ and $j\in [r_i]$.
Note that if $E_s$ holds, then $(V_{s,1},\ldots,V_{s,r})$ is  $(\mathbf{n}, \delta+\gamma/2)$-good as $m_{s,i}=n_i\geq m$ for each $i\in [r]$ and $1/m\ll \gamma$. Thus, considering the distribution of the random partition $V(H)=V_{s,1}\cup V_{s,2}\cup \ldots\cup V_{s,r}$,
the number of $(\mathbf{n}, \delta+\gamma/2)$-good partitions of $V(H)$ is at least $\P(E_s)\cdot\frac{n!}{n_1!,\ldots,n_r!}$, so that the lemma follows from the claim as
\[
\P(E_s)\geq \prod_{i\in [s]}\P(E_i|E_{i-1})\geq \exp\Big(-\sum_{i\in [s]}r_{i-1}\Big)\geq \exp(-r_s)=\exp(-2^s)\geq \exp(-n/2m)\geq \exp(-n).\]
Thus, it is only left to prove the claim.

\medskip

\noindent\emph{Proof of Claim~\ref{claim}.}
Fix $i\in [s]$. For each $j\in [r_{i-1}]$, let $F_{i-1,j}$ be the event that, for every $U\subset V_{i-1,j-1}\cup V_{i-1,j}\cup V_{i-1,j+1}$ with $|U|=k-1$, we have
  \[
  d(U,V_{i,2j-1})\geq (\delta+\gamma-2m_{i,2j-1}^{-1/3})m_{i,2j-1}\;\;\text{ and }\;\;d(U,V_{i,2j})\geq (\delta+\gamma-2m_{i,2j}^{-1/3})m_{i,2j}.
  \]
 Note that, for each $j\in [r_{i-1}]$, as $V_{i-1,j}=V_{i,2j-1}\cup V_{i,2j}$, if $F_{i-1,j}$ holds, then both $E_{i,2j-1}$ and $E_{i,2j}$ hold. Furthermore, once we have chosen the partition $V(H)=V_{i-1,1}\cup V_{i-1,2}\cup \ldots\cup V_{i-1,r_{i-1}}$, the events $F_{i-1,j}$, $j\in [r_{i-1}]$, are independent. We will show that, if we choose a partition $V(H)=V_{i-1,1}\cup V_{i-1,2}\cup \ldots\cup V_{i-1,r_{i-1}}$ for which $E_{i-1}$ holds, then, for each $j\in [r_{i-1}]$, the probability that $F_{i-1,j}$ holds is at least $e^{-1}$, and thus the probability that every such $F_{i-1,j}$, $j\in [r_{i-1}]$, and hence $E_{i}$, holds, is at least $\exp(-r_{i-1})$. If this holds for every partition $V(H)=V_{i-1,1}\cup V_{i-1,2}\cup \ldots\cup V_{i-1,r_{i-1}}$ for which $E_{i-1}$ holds, we then have that 
$\P(E_i|E_{i-1})\geq\exp(-r_{i-1})$,
as required. %Thus it is left to show that, for each $j\in [r_{i-1}]$, $\P(F_{i-1,j}|E_{i-1})\geq e^{-1}$.

Suppose then that we have chosen our partition $V(H)=V_{i-1,1}\cup V_{i-1,2}\cup \ldots\cup V_{i-1,r_{i-1}}$, and that $E_{i-1}$ holds, and let $j\in [r_{i-1}]$. %Fix, then, $j\in [r_{i-1}]$, and assume that $E_{i-1}$ holds. 
Let $U\subset V_{i-1,j-1}\cup V_{i-1,j}\cup V_{i-1,j+1}$ satisfy $|U|=k-1$.
Note that $d(U,V_{i,2j-1})$ has a hypergeometric distribution with parameters $N':=m_{i-1,j}$, $n':=m_{i,2j-1}$ and $m':=d(U,V_{i-1,j})$. Furthermore, as $E_{i-1}$, and hence $E_{i-1,j}$ holds, we have that
\[\E [d(U,V_{i,2j-1})]=\frac{m_{i,2j-1}}{m_{i-1,j}}\cdot d(U,V_{i-1,j})\overset{\eqref{eq:degree}}{\ge} \Big(\delta+\gamma -2m_{i-1,j}^{-1/4}\Big)m_{i,2j-1}.\]
Therefore, by Theorem~\ref{hypergeom}, we have
\begin{equation}\label{eqn:1}
\P\Big(d(U,V_{i,2j-1})\leq \Big(\delta+\gamma-2m_{i-1,j}^{-1/4}\Big)m_{i,2j-1}-m_{i,2j-1}^{2/3}\Big)\leq 2\exp(-2m_{i,2j-1}^{1/3}).
\end{equation}
Now, for each $i',j'\in [s]$ we have $|n_{i'}-n_{j'}|\leq 2k\leq n_{i'}/10$ and hence $n_{i'}\leq 11n_{j'}/10$. Thus, by \eqref{eqn:mij} we have $m_{i,2j-1}\leq 11 m_{i-1,j}/20$ and hence
\begin{equation}\label{eqn:2}
2m_{i-1,j}^{-1/4}\cdot m_{i,2j-1}+m_{i,2j-1}^{2/3}\leq (\tfrac{11}{20})^{1/4}\cdot 2 m_{i,2j-1}^{3/4}+m_{i,2j-1}^{2/3}\leq 2m_{i,2j-1}^{3/4},
\end{equation}
as $m_{i,2j-1}\geq m$ and $1/m\ll 1$. In combination, \eqref{eqn:1} and~\eqref{eqn:2} give us that
\[
\P\Big(d(U,V_{i,2j-1})\leq \Big(\delta+\gamma-2m_{i,2j-1}^{-1/4}\Big)m_{i,2j-1}\Big)\leq 2\exp\Big(-2m_{i,2j-1}^{1/3}\Big).
\]
Similarly, this holds with $V_{i,2j}$ and $m_{i,2j}$ in place of $V_{i,2j-1}$ and $m_{i,2j-1}$. Furthermore, from \eqref{eqn:mij} and that $n_{i'}\leq 11n_{j'}/10$ for all $i',j'\in [s]$, it follows that
$|V_{i-1,j-1}\cup V_{i-1,j}\cup V_{i-1,j+1}|\leq 66m_{i,2j-1}/10$ and $\leq 66m_{i,2j}/10$.
Therefore, using a union bound over all $U\subset V_{i-1,j-1}\cup V_{i-1,j}\cup V_{i-1,j+1}$ satisfying $|U|=k-1$, we have that $F_{i-1,j}$ holds with probability at least
\[
1-(7m_{i,2j-1})^{k-1}\cdot 2\exp(-2m_{i,2j-1}^{1/3})-(7m_{i,2j-1})^{k-1}\cdot 2\exp(-2m_{i,2j-1}^{1/3})\geq e^{-1},
\]
as required,
where we have used that $m_{i,2j-1},m_{i,2j}\geq m$ and $1/m\ll 1$. \hspace{4cm} \qed
\end{proof}

%%%%%%%%%%%%%%%%%%%%%%%%%%%%%%%%%%%%%%%%%%%%%%%%%%%%%%%%%%%%%%%%%%%%%%%%%%%%%%%%%%%%%%%%%%%%%%%%%%%%%%%%%%%%%%%%%%%%%%%%%%%%%%%%%%%%%%%%%%%%%%%%%%%%%
%%%%%%%%%%%%%%%%%%%%%%%%%%%%%%%%%%%%%%%%%%%%%%%%%%%%%%%%%%%%%%%%%%%%%%%%%%%%%%%%%%%%%%%%%%%%%%%%%%%%%%%%%%%%%%%%%%%%%%%%%%%%%%%%%%%%%%%%%%%%%%%%%%%%%
%%%%%%%%%%%%%%%%%%%%%%%%%%%%%%%%%%%%%%%%%%%%%%%%%%%%%%%%%%%%%%%%%%%%%%%%%%%%%%%%%%%%%%%%%%%%%%%%%%%%%%%%%%%%%%%%%%%%%%%%%%%%%%%%%%%%%%%%%%%%%%%%%%%%%
%%%%%%%%%%%%%%%%%%%%%%%%%%%%%%%%%%%%%%%%%%%%%%%%%%%%%%%%%%%%%%%%%%%%%%%%%%%%%%%%%%%%%%%%%%%%%%%%%%%%%%%%%%%%%%%%%%%%%%%%%%%%%%%%%%%%%%%%%%%%%%%%%%%%%

\subsection{Counting Hamilton $\ell$-cycles}
We say that a $k$-graph $P$ with $t$ vertices is an \emph{$\ell$-path}, where $1\le \ell<k$, if $(k-\ell)\mid (t-\ell)$ and there exists an ordering $v_1,\ldots, v_t$ of $V(P)$ such that every edge of $P$ consists of $k$ consecutive vertices and such that every two consecutive edges intersect in exactly $\ell$ vertices. We will usually identify an $\ell$-path $P$ with a corresponding ordering $v_1,\ldots, v_t$. The \emph{ends} of an
$\ell$-path $P=v_1\ldots v_t$ are the tuples $\mathbf{v}=(v_1,\ldots,v_\ell)$ and $\mathbf {v}'=(v_{t-\ell+1},\ldots, v_t)$, in which case we say that $\mathbf{v}$ and $\mathbf{v}'$ are $\ell$-connected by $P$. We say that a $k$-graph $H$ \emph{contains a Hamilton $\ell$-path} if there is an $\ell$-path $P$ in $H$ with $V(P)=V(H)$.

\begin{definition}
A $k$-graph $H$ is Hamilton $\ell$-path connected if, for any pair of vertex-disjoint tuples $\mathbf u,\mathbf v\in (V(H))_\ell$, there is a Hamilton $\ell$-path in $H$ which $\ell$-connects $\mathbf u$ with $\mathbf v$.
\end{definition}
The next lemma states that $k$-graphs with large minimum codegree which satisfies the natural divisibility conditions are Hamilton $\ell$-path connected. The proof of Lemma~\ref{lemma:connected} is a very straightforward modification of the original argument of K\"uhn, Mycroft and Osthus~\cite{kuhn2010hamilton} for finding Hamilton $\ell$-cycles via the \textit{absorption method} (as can be seen in the special case when $(k-\ell)\mid  k$, where this was done by Glock, Gould, Joos, K\"uhn and Osthus as~\cite[Lemma~3.7]{glock2021counting}). For completion, however, we include a proof in an appendix.

\begin{lemma}\label{lemma:connected}Let $1\leq \ell<k$ and let $1/n\ll \gamma,1/k$ satisfy $(k-\ell)\nmid (n-\ell)$. If $H$ is an $n$-vertex $k$-graph with $\delta(H)\ge (\delta_{k,\ell}+\gamma)n$ (see~\eqref{eq:deltakl}), then $H$ is Hamilton $\ell$-path connected.
\end{lemma}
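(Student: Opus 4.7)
The plan is to adapt the absorption method of K\"uhn, Mycroft and Osthus~\cite{kuhn2010hamilton}, which was designed to find Hamilton $\ell$-cycles, so that it instead produces a Hamilton $\ell$-path whose end-tuples are the prescribed $\mathbf{u}$ and $\mathbf{v}$. The argument has three standard ingredients, each a mild variant of what is already known, that I would verify and then combine.

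The first ingredient is a \emph{Connecting Lemma}: for any two disjoint $\ell$-tuples $\mathbf{a}, \mathbf{b} \in (V(H))_\ell$ and any reserved set $W \subseteq V(H)$ with $|W| \le \gamma n/2$, there is an $\ell$-path in $H - W$ of bounded length $L = L(\gamma, k)$ that $\ell$-connects $\mathbf{a}$ to $\mathbf{b}$. This is shown by extending $\ell$-paths greedily from each of $\mathbf{a}$ and $\mathbf{b}$: each $(k-1)$-tuple has codegree at least $(\delta_{k,\ell}+\gamma)n$, which comfortably exceeds $|W|$ plus what has already been used, and then the two halves are joined via a common-neighbourhood argument on their current ends. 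The second ingredient is an \emph{Absorbing Path}: a short $\ell$-path $P_A$ of order $o(n)$ with ends $\mathbf{a}, \mathbf{a}'$ such that every sufficiently small set $X \subseteq V(H) \setminus V(P_A)$ with $(k-\ell) \mid |X|$ can be inserted to yield an $\ell$-path on $V(P_A) \cup X$ with the same ends $\mathbf{a}, \mathbf{a}'$. This is built by showing, via the minimum codegree condition combined with the Connecting Lemma, that every $(k-\ell)$-tuple of vertices admits many constant-sized local absorbers (substructures traversable in two ways, one using the tuple and one not), then selecting a random collection of these and chaining them together so that, with positive probability, each $(k-\ell)$-tuple is covered by sufficiently many absorbers.

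With these in hand, the main argument is to reserve $\mathbf{u}$ and $\mathbf{v}$, build $P_A$ on a small vertex set disjoint from them, use the Connecting Lemma to attach $\mathbf{u}$ to one end of $P_A$, apply the almost-cover variant of Theorem~\ref{theorem:Dirac} to the remaining vertices, which still induce a $k$-graph with minimum codegree at least $(\delta_{k,\ell}+\gamma/2)n$, to obtain an almost-perfect $\ell$-path factor, chain these factor-paths together along with the second end of $P_A$ and terminating at $\mathbf{v}$ using the Connecting Lemma, and finally absorb the tiny uncovered set into $P_A$. The main obstacle---and the reason the proof is a modification rather than a verbatim reuse of \cite{kuhn2010hamilton}---is juggling divisibility at every step, so that the absorbed set, each connecting path and the almost-cover all have sizes compatible with $(k-\ell)$-divisibility. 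This is most delicate when $(k-\ell) \nmid k$, where $\delta_{k,\ell}$ takes the second value in~\eqref{eq:deltakl}, the extremal examples leave less slack, and additional care is needed to ensure that the initial and final $\ell$-tuples can be fit into structures of the correct residue class.
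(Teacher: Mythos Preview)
Your overall strategy matches the paper's: adapt the K\"uhn--Mycroft--Osthus absorption argument so that it outputs a Hamilton $\ell$-path with prescribed ends. The paper sets aside a random reservoir for the connecting steps and extracts its long bulk path directly from a Hamilton $\ell$-cycle given by Theorem~\ref{theorem:Dirac} rather than from an $\ell$-path factor, but these are inessential differences.

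There is, however, a real gap in your Absorbing Path ingredient. You assert that $P_A$ can absorb \emph{any} small leftover $X$ with $(k-\ell)\mid|X|$, justified by the claim that ``every $(k-\ell)$-tuple of vertices admits many constant-sized local absorbers''. When $(k-\ell)\mid k$ (so $\delta_{k,\ell}=1/2$) this is essentially correct, but when $(k-\ell)\nmid k$ the threshold $\delta_{k,\ell}$ is strictly below $1/2$, and with only this weaker codegree it is \emph{not} known that every $(k-\ell)$-set has many absorbers. Indeed, Lemma~6.2 of~\cite{kuhn2010hamilton} only guarantees that all but $o(n^{k-\ell})$ of the $(k-\ell)$-sets are $(\beta,t)$-good, and the absorbing path of Lemma~6.3 there absorbs only collections of good sets. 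Your universal absorber is therefore unjustified in the $(k-\ell)\nmid k$ regime, and the ``additional care'' you allude to is not merely divisibility bookkeeping.

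The paper closes this gap with an extra step you omit. It arranges (via Lemma~6.3 of~\cite{kuhn2010hamilton}) that every vertex outside the absorbing path $P_0$ lies in at most $\theta n^{k-\ell-1}$ bad $(k-\ell)$-sets. Once the leftover set $X$ is determined, it forms the auxiliary $(k-\ell)$-graph $G$ on $X$ whose edges are the good sets; the bad-set bound forces $G[X]$ to have nearly full minimum vertex degree, so a hypergraph perfect matching theorem (Theorem~\ref{theorem:matching}) partitions $X$ into good $(k-\ell)$-sets, and it is only these that are fed into $P_0$. Without this matching argument (or some substitute), the final absorption step does not go through when $(k-\ell)\nmid k$.
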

Now we are ready for the proof of our first main result.
\begin{proof}[Proof of Theorem~\ref{mainthm}] Let $\delta=\delta_{k,\ell}$ and let $m$ be such that every $k$-graph on $m'\geq m/2$ vertices with minimum codegree at least $(\delta+\gamma/4)m'$ is Hamilton $\ell$-path connected (using Lemma~\ref{lemma:connected}) and such that $1/m\ll \delta,\gamma,1/k$. Let $n_0$ and $C$ be such that, for every $n\geq n_0$, $1/n\ll 1/C\ll 1/m$. Let $H$ be an $n$-vertex $k$-graph with $\delta(H)\geq (\delta+\gamma)n$.
By Lemma~\ref{lemma:partition}, there exist a tuple $\mathbf{n}=(n_1,\dots, n_r)$ with $\sum_{i\in [r]}n_i=n$ and, for each $i\in [r]$, $m\le n_i\le 5m$ and $(k-\ell)\mid n_i$, such that $V(H)$ has at least $e^{-n}\binom{n}{n_1,\ldots,n_r}$ partitions that are $(\mathbf{n},\delta+\gamma/2)$-good.

Now, given a partition $\mathcal P=(V_1,\ldots,V_r)$ of $V(H)$, say that a Hamilton $\ell$-cycle $Q$ is \emph{$\mathcal P$-respecting} if, for each $i\in [r]$ (for some direction and working modulo $r$ in the subscript), all the vertices in $V_i$ appear concurrently on $Q$, and the interval of vertices in $V_i$ on $Q$ is just before the interval of vertices in $V_{i+1}$ on $Q$.
%\begin{enumerate}
%  \item[\textbf {H1}]\label{prop1}
%  \item[\textbf {H2}]\label{prop2}
%\end{enumerate}
(See Figure~\ref{fig:new} for a Hamilton cycle that is $(V_1,\ldots,V_r)$-respecting.)
\begin{comment}
\begin{figure}[ht]
    \centering
    \begin{tikzpicture}[scale=.7]
       \draw  (230:2.5) node{$V_1$};
       \draw  (300:2.5) node{$V_2$};
       \draw  (0:2.5) node{$V_3$};
       \draw  (80:2.5) node{$V_4$};
       \draw  (150:2.5) node{$V_5$};
      \tikzstyle{every node}=[circle, draw, fill, inner sep=0pt, minimum width=2pt]
      \foreach \x in {0,10,20,...,350}
      \draw (\x:2) node {};
      \draw (0,0) circle [radius=2];
    \draw  (198:2.2) arc(198:262:2.2);
    \draw[-] (198:2.1) -- (198:2.3);
    \draw[-] (262:2.1) -- (262:2.3);
    \draw  (268:2.2) arc(268:332:2.2);
    \draw[-] (268:2.1) -- (268:2.3);
    \draw[-] (332:2.1) -- (332:2.3);
      \draw  (338:2.2) arc(338:402:2.2);
      \draw[-] (338:2.1) -- (338:2.3);
      \draw[-] (402:2.1) -- (402:2.3);
      \draw  (408:2.2) arc(408:472:2.2);
      \draw[-] (408:2.1) -- (408:2.3);
      \draw[-] (472:2.1) -- (472:2.3);
      \draw  (478:2.2) arc(478:552:2.2);
      \draw[-] (478:2.1) -- (478:2.3);
      \draw[-] (552:2.1) -- (552:2.3);
    \end{tikzpicture}
    \caption{A figure showing a Hamilton cycle respecting the partition $(V_1,V_2,V_3,V_4,V_5)$.}
\end{figure}
\end{comment}
\begin{claim}\label{onlyclaim}For each $(\mathbf{n},\delta+\gamma/2)$-good partition $\mathcal P=(V_1,\ldots,V_r)$, $H$ has a $\mathcal P$-respecting Hamilton $\ell$-cycle.
\end{claim}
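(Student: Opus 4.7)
I would construct the $\mathcal{P}$-respecting Hamilton $\ell$-cycle as an alternating concatenation $P_1 R_1 P_2 R_2 \cdots P_r R_r$, where each $R_i$ is a short ``bridge'' $\ell$-path of bounded length from an $\ell$-tuple $\mathbf{b}_i\subset V_i$ to an $\ell$-tuple $\mathbf{a}_{i+1}\subset V_{i+1}$, with all its remaining vertices lying in $V_i\cup V_{i+1}$, and each $P_i$ is a Hamilton $\ell$-path of the induced subgraph $H[V_i']$ running from $\mathbf{a}_i$ to $\mathbf{b}_i$, for $V_i':=V_i\setminus\bigl((V(R_{i-1})\cup V(R_i))\setminus(\mathbf{a}_i\cup \mathbf{b}_i)\bigr)$. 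Concatenating these, with the shared $\ell$-tuples $\mathbf{a}_i$ and $\mathbf{b}_i$ acting as glue between successive segments, automatically yields an $\ell$-cycle whose vertex order visits $V_1,\ldots,V_r$ cyclically in turn.

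To make the divisibility work, I would first fix constants $x,y\ge\ell$, depending only on $k$ and $\ell$, with $(k-\ell)\mid(x+y-\ell)$ (for instance $x=\ell$ and $y=\lceil \ell/(k-\ell)\rceil(k-\ell)$), and arrange each $R_i$ to use exactly $x$ vertices of $V_i$ (the first $\ell$ being $\mathbf{b}_i$) followed by exactly $y$ vertices of $V_{i+1}$ (the last $\ell$ being $\mathbf{a}_{i+1}$). The condition on $x+y$ guarantees that $R_i$ is a valid $\ell$-path, and, combined with $(k-\ell)\mid n_i$, forces $(k-\ell)\mid(|V_i'|-\ell)$, since $|V_i'|=n_i-(x+y-2\ell)$; this is precisely the arithmetic condition needed for a Hamilton $\ell$-path on $V_i'$.

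The bridges $R_i$ are constructed greedily in sequence, one edge at a time: to extend the current end $\mathbf{u}$ of $R_i$ by an edge whose $k-\ell$ new vertices should lie in a chosen target $V_j$ with $j\in\{i,i+1\}$, pick $k-\ell-1$ such vertices arbitrarily in $V_i\cup V_{i+1}$, and then invoke property \ref{P2} on the resulting $(k-1)$-tuple $U\subset V_{j-1}\cup V_j\cup V_{j+1}$, noting $d(U,V_j)\ge(\delta_{k,\ell}+\gamma/2)|V_j|$, to choose the final vertex in $V_j$ while avoiding the $O(k)$ vertices already used or reserved by neighbouring bridges. Once all the $R_i$ are in hand, property \ref{P2} together with the removal of only $O(k)$ vertices from each $V_i$ gives $\delta(H[V_i'])\ge(\delta_{k,\ell}+\gamma/4)|V_i'|$ (using $|V_i|\ge m$ and $1/m\ll\gamma$), so Lemma~\ref{lemma:connected} supplies the required Hamilton $\ell$-path $P_i$ from $\mathbf{a}_i$ to $\mathbf{b}_i$ inside $H[V_i']$. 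The main delicate point is the opening choice of $x$ and $y$, which must simultaneously make both the bridges $R_i$ and the reduced sets $V_i'$ have lengths compatible with the $\ell$-path structure; once this arithmetic is settled, the rest is a routine greedy construction followed by $r$ invocations of Lemma~\ref{lemma:connected}.
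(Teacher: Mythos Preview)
Your argument is correct, and it follows the same overall strategy as the paper: find a Hamilton $\ell$-path in (essentially) each $H[V_i]$ via Lemma~\ref{lemma:connected}, and glue these at shared $\ell$-tuples. The difference is only in how the gluing is arranged.

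The paper avoids your greedy bridge step entirely. It simply picks an arbitrary $\ell$-tuple $\mathbf{v}_i\in (V_i)_\ell$ for each $i$, sets $H_i=H[V_i\cup\{v_{i-1,1},\dots,v_{i-1,\ell}\}]$, and applies Lemma~\ref{lemma:connected} to $H_i$ to obtain a Hamilton $\ell$-path from $\mathbf{v}_{i-1}$ to $\mathbf{v}_i$. Property~\ref{P2} is exactly what makes this legal: any $(k-1)$-set $U\subset V_{i-1}\cup V_i$ has $d(U,V_i)\ge(\delta+\gamma/2)|V_i|$, so $\delta(H_i)\ge(\delta+\gamma/4)|H_i|$; and the divisibility is automatic because $|H_i|-\ell=n_i$ is already divisible by $k-\ell$. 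Concatenating the resulting paths gives the $\mathcal P$-respecting cycle with no further work.

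In other words, the paper lets the Hamilton $\ell$-path lemma do the ``bridging'' for you by enlarging each piece by just $\ell$ vertices from the previous part, rather than first building explicit short connectors. This sidesteps your choice of $x,y$, the greedy edge-by-edge extension, and the divisibility bookkeeping for the trimmed sets $V_i'$. Your route works, but you might note that the extra machinery can be dropped once you observe that \ref{P2} already controls the codegree of $(k-1)$-sets straddling $V_{i-1}$ and $V_i$.
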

Before proving Claim~\ref{onlyclaim}, let us show how to deduce the theorem from it. Note that any Hamilton $\ell$-cycle in $H$ respects at most $2n$ partitions $(V_1,\ldots,V_r)$ of $V(H)$ with $|V_i|=n_i$ for each $i\in [r]$, as choosing a direction and the first vertex of $V_1$ specifies the ordered partition. Then, by Claim~\ref{onlyclaim} and our bound on the number of $(\mathbf{n},\delta+\gamma/2)$-good partitions of $V(H)$, the number of Hamilton $\ell$-cycles in $H$ is at least
\begin{equation}\label{eqn:callback}
\frac{e^{-n}}{2n}\cdot\binom{n}{n_1,\ldots,n_r}=
\frac{e^{-n}}{2n}\cdot
\frac{n!}{\prod_{i\in [r]}{n_i!}}\geq \frac{e^{-n}}{2n}\cdot \frac{n!}{(5m)!^{n/m}}\geq \exp(n\log n-Cn),
\end{equation}
using Stirling's formula and that $1/n\ll 1/C\ll 1/m$.

%Claim~\ref{onlyclaim} implies that the number of possible ordered partitions $\mathcal{P}$ of $V(H)$ having a $\mathcal P$-respecting Hamilton $\ell$-cycle is at least $\exp(n\log n-C'n)$. On the other hand,
%Thus, the number of Hamilton $\ell$-cycles in $H$ is at least $\exp(n\log n-C'n)/2n\geq \exp(n\log n-Cn)$, as required.

Therefore, it is left only to prove Claim~\ref{onlyclaim}.

\medskip

\noindent\emph{Proof of Claim~\ref{onlyclaim}.} Let $\mathcal P=(V_1,\ldots,V_r)$ be an $(\mathbf{n},\delta+\gamma/2)$-good partition. For each $i\in [r]$, using that $|V_i|\geq m\geq \ell$, pick an arbitrary $\ell$-tuple $\mathbf v_i=(v_{i,1},v_{i,2},\ldots ,v_{i,\ell})\in (V_i)_{\ell}$. For each $i\in [r]$, let $H_i=H[V_i\cup \{v_{i-1,1},\ldots,v_{i-1,\ell}\}]$ (working modulo $r$ in the indices), so that, as $\mathcal P$ is $(\mathbf{n},\delta+\gamma/2)$-good, we have that $\delta(H_i)\ge (\delta+\gamma/2)|V_{i}|\geq (\delta+\gamma/4)|H_i|$. Moreover, $|H_i|-\ell=n_i$ is divisible by $k-\ell$. Therefore, by Lemma~\ref{lemma:connected}, there is a Hamilton $\ell$-path in $H_i$ with vertex sequence $v_{i-1,1}v_{i-1,2}\ldots v_{i-1,\ell}L_i v_{i,1}\ldots v_{i,\ell}$ for some sequence $L_i$. Then, the ordering
$L_1v_{1,1}\ldots v_{1,\ell}L_2v_{2,1}\ldots v_{2,\ell}L_3\ldots v_{r-1,1}\ldots v_{r-1,\ell}L_rv_{r,1}\ldots v_{r,\ell}$
is an ordering of the vertices of $H$ which gives a Hamilton $\ell$-cycle respecting the partition $(V_1,\ldots,V_r)$, as required.
\hfill \qed
\end{proof}

%%%%%%%%%%%%%%%%%%%%%%%%%%%%%%%%%%%%%%%%%%%%%%%%%%%%%%%%%%%%%%%%%%%%%%%%%%%%%%%%%%%%%%%%%%%%%%%%%%%%%%%%%%%%%%%%%%%%%%%%%%%%%%%%%%%%%%%%%%%%%%%%%%%%%
%%%%%%%%%%%%%%%%%%%%%%%%%%%%%%%%%%%%%%%%%%%%%%%%%%%%%%%%%%%%%%%%%%%%%%%%%%%%%%%%%%%%%%%%%%%%%%%%%%%%%%%%%%%%%%%%%%%%%%%%%%%%%%%%%%%%%%%%%%%%%%%%%%%%%
%%%%%%%%%%%%%%%%%%%%%%%%%%%%%%%%%%%%%%%%%%%%%%%%%%%%%%%%%%%%%%%%%%%%%%%%%%%%%%%%%%%%%%%%%%%%%%%%%%%%%%%%%%%%%%%%%%%%%%%%%%%%%%%%%%%%%%%%%%%%%%%%%%%%%
%%%%%%%%%%%%%%%%%%%%%%%%%%%%%%%%%%%%%%%%%%%%%%%%%%%%%%%%%%%%%%%%%%%%%%%%%%%%%%%%%%%%%%%%%%%%%%%%%%%%%%%%%%%%%%%%%%%%%%%%%%%%%%%%%%%%%%%%%%%%%%%%%%%%%

\subsection{Counting powers of tight Hamilton cycles}
For Theorem~\ref{thm:powers}, we use the following definitions. Let $t\ge k\ge 2$ and let $H$ be a $k$-graph. The \textit{$t$-clique graph}  of $H$, denoted $K_t(H)$, is the $t$-graph with vertex set $V(K_t(H))=V(H)$  where $\{v_1,\ldots, v_t\}$ is an edge of $K_t(H)$ if and only if $H[\{v_1,\ldots, v_t\}]$ is a $k$-uniform clique in $H$.

Our proof of Theorem~\ref{thm:powers} is very similar to the proof of Theorem~\ref{mainthm} so we do not repeat it here and only state the differences. The main thing is to note that, given
\begin{itemize}
\item  a partition $V(H)=V_1\cup \ldots \cup V_r$ and distinct vertices $v_{i,1},\ldots,v_{i,t-1}\in V_i$, $i\in [r]$, such that, for each $i\in [r]$, $H[\{v_{i,1},\ldots,v_{i,t-1}\}]$ is a $(t-1)$-clique, and
\item orderings $L_i$, $i\in [r]$, of the vertices in $V_i\setminus\{v_{i,1},\ldots,v_{i,t-1}\}$ where $v_{i-1,1},\ldots,v_{i-1,t-1}L_iv_{i,1},\ldots,v_{i,t-1}$ is the ordering of a Hamilton tight path in $K_t(H[V_i\cup \{v_{i-1,1},\ldots,v_{i-1,t-1}\}])$ for each $i\in [r]$,
\end{itemize}
the ordering $L_1v_{1,1}\ldots v_{1,t-1}L_2v_{2,1}\ldots v_{2,t-1}L_3\ldots v_{r-1,1}\ldots v_{r-1,t-1}L_rv_{r,1}\ldots v_{r,t-1}$
is an ordering of the vertices of $H$ which gives a $(t-k+1)$th power of a Hamilton tight cycle respecting the partition $(V_1,\ldots,V_r)$.
Thus, the proof of Theorem~\ref{thm:powers} follows identically to that of Theorem~\ref{mainthm} using the following proposition to select the vertices $v_{i,1},\ldots,v_{i,t-1}\in V_i$, $i\in [r]$, in place of the vertices $v_{i,1},\ldots,v_{i,\ell}\in V_i$, $i\in [r]$, and the following lemma in place of Lemma~\ref{lemma:connected}.

\begin{proposition}\label{prop:endfinder} Let $t\ge k\ge 2$ and let $1/n\ll \gamma, 1/t$. If $H$ is an $n$-vertex $k$-graph satisfying~\eqref{eq:cliques}, then $H$ contains a $t$-vertex $k$-uniform clique.
\end{proposition}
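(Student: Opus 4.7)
The plan is to construct the clique greedily, picking its vertices $v_1,\ldots,v_t$ one at a time. First I pick $v_1,\ldots,v_{k-1}\in V(H)$ arbitrarily but pairwise distinct. Given a partial $k$-uniform clique $\{v_1,\ldots,v_j\}$ with $j\geq k-1$, a new vertex $v$ extends it to a clique on $j+1$ vertices if and only if, for every $(k-1)$-subset $S\subseteq\{v_1,\ldots,v_j\}$, the $k$-set $S\cup\{v\}$ is an edge of $H$. Writing $N(S)$ for the codegree neighbourhood of $S$ in $H$, this is the condition $v\in\bigcap_S N(S)$ taken over the $\binom{j}{k-1}$ such subsets.

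By the hypothesis~\eqref{eq:cliques}, each $N(S)$ misses at most $\bigl(\frac{1}{\binom{t-1}{k-1}+\binom{t-2}{k-2}}-\gamma\bigr)n$ vertices of $V(H)$. A union bound over the $\binom{j}{k-1}$ subsets $S$ (together with excluding $v_1,\ldots,v_j$ themselves) shows that the number of valid choices for $v_{j+1}$ is at least
\[
n - \binom{j}{k-1}\biggl(\frac{1}{\binom{t-1}{k-1}+\binom{t-2}{k-2}}-\gamma\biggr)n - j.
\]
The tightest case is $j=t-1$, where, using the Pascal-type identity $\binom{t-1}{k-1}+\binom{t-2}{k-2}-\binom{t-1}{k-1}=\binom{t-2}{k-2}\geq 1$, this simplifies to
\[
\biggl(\frac{\binom{t-2}{k-2}}{\binom{t-1}{k-1}+\binom{t-2}{k-2}}+\binom{t-1}{k-1}\gamma\biggr)n-(t-1),
\]
which is positive whenever $n$ is large enough compared to $t$. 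Hence the greedy construction reaches step $t$ and outputs the desired $t$-vertex $k$-uniform clique.

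I do not anticipate any substantive obstacle here: the constant in~\eqref{eq:cliques} is in fact stronger than what is needed to find a single clique, since the weaker condition $\delta(H)\geq(1-1/\binom{t-1}{k-1}+\gamma)n$ would already suffice for the union-bound argument above. The step that comes closest to using the full strength of~\eqref{eq:cliques} is the selection of the last vertex $v_t$, and even there the intersection $\bigcap_S N(S)$ retains size linear in $n$, so no refinement beyond the naive greedy strategy is required.
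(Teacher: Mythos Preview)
Your proposal is correct and matches the paper's approach: the paper states only that Proposition~\ref{prop:endfinder} ``can be proved simply by picking the vertices of the clique greedily, where the codegree bound used is comfortably sufficient for this,'' and your argument is precisely a careful version of that greedy construction. (One cosmetic remark: what you call a ``Pascal-type identity'' is just the subtraction $\binom{t-1}{k-1}+\binom{t-2}{k-2}-\binom{t-1}{k-1}=\binom{t-2}{k-2}$, so you might drop that label.)
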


\begin{lemma}\label{lemma:connected:cliques}Let $t\ge k\ge 2$ and let $1/n\ll \gamma, 1/t$. If $H$ is an $n$-vertex $k$-graph satisfying~\eqref{eq:cliques}, then for every pair of disjoint tuples $\mathbf{v},\mathbf{v'}\in (V(H))_{t-1}$ whose vertices support a $(t-1)$-vertex $k$-uniform clique in $H$, there is a Hamilton tight path in $K_t(H)$ with ends $\mathbf{v}$ and $\mathbf{v'}$.
\end{lemma}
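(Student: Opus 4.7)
The plan is to adapt the absorption-method proof of the main theorem of Pavez-Sign\'e, Sanhueza-Matamala, and Stein~\cite{pavez2021towards} so as to produce a Hamilton tight path in $K_t(H)$ with prescribed ends, rather than a Hamilton tight cycle. A Hamilton tight path in $K_t(H)$ is the same object as a $(t{-}k{+}1)$th power of a tight Hamilton path in $H$, and the natural compatibility condition at an end $\mathbf v \in (V(H))_{t-1}$ is precisely that the vertices of $\mathbf v$ span a $(t{-}1)$-clique in $H$, which is exactly our hypothesis on $\mathbf v$ and $\mathbf v'$.

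First I would establish a connecting lemma: for any two disjoint $(t{-}1)$-tuples $\mathbf u,\mathbf u' \in (V(H))_{t-1}$ whose vertices support $(t{-}1)$-cliques in $H$, and any set $W\subseteq V(H)$ with $|W|\le \gamma n/100$ to be avoided, there is a tight path in $K_t(H)$ of length bounded in $t$, with ends $\mathbf u$ and $\mathbf u'$ and internal vertices in $V(H)\setminus(W\cup \mathbf u\cup \mathbf u')$. The codegree bound in \eqref{eq:cliques} is designed so that, from any $(t{-}1)$-clique $C$ in $H$, linearly many vertices $v\notin W$ satisfy that $C\cup\{v\}$ still contains a $(t{-}1)$-clique extending the tight path of $K_t$-edges; iterating from both sides and then merging yields the desired short connector.

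Second, I would build an absorbing tight path $P_{\mathrm{abs}}$ in $K_t(H)$ of length $\Theta(\eta n)$ for some small $\eta\ll \gamma$, with the property that any subset $X\subseteq V(H)\setminus V(P_{\mathrm{abs}})$ with $|X|\le \eta^2 n$ can be absorbed into a tight path in $K_t(H)$ on $V(P_{\mathrm{abs}})\cup X$ with the same ends as $P_{\mathrm{abs}}$. The construction, essentially as in~\cite{pavez2021towards}, finds for each vertex $v$ many local absorbing gadgets---short tight paths of $K_t$-cliques in $H$ that can be rewritten to include $v$---using the codegree hypothesis, and then stitches together a random collection of these gadgets via the connecting lemma so that every vertex is served by many gadgets within $P_{\mathrm{abs}}$. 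Simultaneously, I would set aside a small random reservoir $R\subseteq V(H)$ used to perform connections later while still being recoverable.

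Third, I would use the connecting lemma to extend the fixed endpoints $\mathbf v$ and $\mathbf v'$ into a tight path $P_0$ in $K_t(H)$ containing $P_{\mathrm{abs}}$ with ends $\mathbf v,\mathbf v'$, then extend $P_0$ to cover all but a set $X$ of at most $\eta^2 n$ vertices outside $R$ using a standard long-path / almost-covering argument (iterative greedy extension in $K_t(H)$, justified by \eqref{eq:cliques}), and finally incorporate the leftover vertices into $P_{\mathrm{abs}}$ via its absorbing property and reabsorb the unused reservoir vertices along the way. The main obstacle is the absorber construction itself: the absorbing gadget for a $(t{-}k{+}1)$th power of a tight cycle must splice a vertex into the clique structure without destroying neighbouring $K_t$-cliques, which is precisely what forces the denominator $\binom{t-1}{k-1}+\binom{t-2}{k-2}$ in \eqref{eq:cliques}. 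Fortunately, this construction is already carried out in~\cite{pavez2021towards}, and the only essential change here is to treat $P_{\mathrm{abs}}$ as a path with two free ends and to attach the prescribed $\mathbf v,\mathbf v'$ through the connecting lemma, rather than closing it into a cycle.
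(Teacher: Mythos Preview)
Your proposal is correct and follows essentially the same absorption-method strategy as the paper's sketch: a connecting lemma for $(t{-}1)$-clique ends (Lemma~4.1 in~\cite{pavez2021towards}), an absorbing path in $K_t(H)$ (Lemma~7.3 and Step~1 of the proof of Theorem~1.1 in~\cite{pavez2021towards}), a reservoir, an almost-spanning path, and final absorption. The only cosmetic difference is that the paper obtains the long path by applying the main theorem of~\cite{pavez2021towards} to $H'-(V(P_0)\cup R)$ as a black box, whereas you describe it as an iterative greedy extension; both work, and the paper also notes explicitly (as you implicitly assume) that no matching step on ``good'' sets is needed here because there are no bad vertices in this setting.
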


Proposition~\ref{prop:endfinder} can be proved simply by picking the vertices of the clique greedily, where the codegree bound used is comfortably sufficient for this. The proof of Lemma~\ref{lemma:connected:cliques} follows the same ideas as the proof of Lemma~\ref{lemma:connected}, but using results of Pavez-Sign\'e, Sanhueza-Matamala and Stein~\cite{pavez2021towards} in place of those by K\"uhn, Mycroft and Osthus~\cite{kuhn2010hamilton}. We comment further on this in the appendix.

\subsection{Counting $F$-factors}
To count factors, we no longer need to connect the different parts of a good partition, but we do wish to have conditions for different degrees than just the codegree, motivating the following definition.
\begin{definition}Let $k,d\in\mathbb N$ satisfy $1\le d\le k-1$. Let $\mu >0$ and $\mathbf{n}=(n_1,\dots, n_r)\in\mathbb N^r$. For an $n$-vertex $k$-graph $H$,  say that a partition $V(H)=V_1\cup \ldots\cup V_r$ is \textit{$(\mathbf{n},d,\mu)$-good} if, for each $i\in [r]$, $|V_i|=n_i$ and $\delta_d(H[V_i])\ge \mu \binom{n_i}{k-d}$.\end{definition}
With only minor modification, including using McDiarmid’s inequality (see Lemma~1.2 in~\cite{McDiarmid1989}) to bound similar events to the events $E_{i,j}$, the proof of Lemma~\ref{lemma:partition} can be adapted to prove the following.
\begin{lemma}\label{lemma:partition:2}Let $1\le d\le k-1$ and let $1/n\ll 1/m\ll \gamma,\mu ,1/k,1/t$ satisfy $t\mid n$. Then, there exists a tuple $(n_1,\ldots, n_r)\in\mathbb N^r$, with $\sum_{i\in [r]}n_i=n$, and $m\le n_i\le 5m$ and $t\mid n_i$ for each $i\in [r]$, such that the following holds. If $H$ is an $n$-vertex $k$-graph with $\delta_d(H)\ge(\mu+\gamma)\binom{n}{k-d}$, then the number of $(\mathbf{n},d,\mu+\gamma/2)$-good partitions in $H$ is at least $e^{-n}\binom{n}{n_1,\ldots, n_r}$.
\end{lemma}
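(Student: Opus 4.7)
The plan is to follow the proof of Lemma~\ref{lemma:partition} essentially verbatim, with two structural changes. First, because $F$-factors require no connectivity between parts of a good partition, there is no analogue of condition \ref{P2}: the invariant to maintain is purely a minimum $d$-degree bound inside each $V_{i,j}$. Second, the quantity to be controlled at each bisection step is the $d$-degree of a fixed $d$-set $U$ into a random subset, which is \emph{not} hypergeometrically distributed (when $d<k-1$), so Theorem~\ref{hypergeom} must be replaced by McDiarmid's bounded differences inequality for sampling without replacement, exactly as suggested before the statement of the lemma.

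Set up the parameters as in Lemma~\ref{lemma:partition}: let $s$ be such that $2m\le n/2^s<4m$, let $r=2^s$, and choose $n_1,\dots,n_r$ with $m\le n_i\le 5m$, $t\mid n_i$, $\sum_i n_i=n$, and $|n_i-n_j|\le 2t$ (possible because $t\mid n$ and $1/m\ll 1/t$). Let $\mathbf n=(n_1,\dots,n_r)$. Given $H$, iteratively bisect $V(H)=V_{0,1}$ through $s$ levels using \eqref{eqn:mij} exactly as before, producing at level $i$ a partition $V(H)=V_{i,1}\cup\cdots\cup V_{i,r_i}$ with $r_i=2^i$ and $|V_{i,j}|=m_{i,j}$. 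For each $0\le i\le s$ and $j\in[r_i]$, let $E_{i,j}$ be the event that for every $d$-set $U\subseteq V_{i,j}$,
\[
d_{H[V_{i,j}]}(U)\ge \bigl(\mu+\gamma-2m_{i,j}^{-1/4}\bigr)\binom{m_{i,j}}{k-d},
\]
let $E_i=\bigcap_{j}E_{i,j}$, and note $E_0$ holds by hypothesis. The target is the analogue of Claim~\ref{claim}: $\P(E_i\mid E_{i-1})\ge\exp(-r_{i-1})$. Since the events propagating $E_{i-1,j}$ to $\{E_{i,2j-1},E_{i,2j}\}$ are independent across $j\in[r_{i-1}]$ once the level-$(i-1)$ partition is fixed, it suffices to show that, conditional on $E_{i-1,j}$, the two halves satisfy the invariant with probability at least $e^{-1}$ for each $j$.

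Fix $j\in[r_{i-1}]$ and a $d$-set $U\subseteq V_{i-1,j}$. Model the random split as taking a uniform random permutation of $V_{i-1,j}$ and putting the first $m_{i,2j-1}$ elements into $V_{i,2j-1}$. Let $X_U$ be the number of edges $e\in E(H)$ with $U\subseteq e\subseteq V_{i,2j-1}$, which equals $d_{H[V_{i,2j-1}]}(U)$ when $U\subseteq V_{i,2j-1}$. Transposing two positions in the permutation can change $X_U$ by at most $2\binom{m_{i-1,j}-1}{k-d-1}\le C m_{i-1,j}^{k-d-1}$, since any edge whose count changes must contain one of the swapped vertices together with $U$. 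McDiarmid's inequality (Lemma~1.2 of~\cite{McDiarmid1989}) then gives, for any $\tau>0$,
\[
\P\bigl(|X_U-\E X_U|\ge \tau\bigr)\le 2\exp\!\left(-\frac{2\tau^2}{m_{i-1,j}\cdot C^2 m_{i-1,j}^{2(k-d-1)}}\right).
\]
Choosing $\tau=m_{i,2j-1}^{k-d-1/3}$ and using $m_{i,2j-1}\ge\tfrac{9}{20}m_{i-1,j}$ yields a tail bound of the form $2\exp(-\Omega(m_{i,2j-1}^{1/3}))$. Under $E_{i-1,j}$, a straightforward mean computation (the inclusion probability of a $(k-d)$-set into $V_{i,2j-1}$, conditioned on $U\subseteq V_{i,2j-1}$, is $\binom{m_{i,2j-1}-d}{k-d}/\binom{m_{i-1,j}-d}{k-d}$) combined with the size comparison between $\binom{m_{i,2j-1}}{k-d}$ and $\binom{m_{i-1,j}}{k-d}$ gives an expected value and deviation slack entirely analogous to~\eqref{eqn:2}: the error in the degree improves from $2m_{i-1,j}^{-1/4}$ to $2m_{i,2j-1}^{-1/4}$ with room to spare, provided $1/m\ll 1$.

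Taking a union bound over the at most $m_{i-1,j}^d\le(5m)^d$ possible $d$-sets $U$ in $V_{i-1,j}$, and over both halves, the doubly-exponential tail crushes the polynomial union factor, so the joint event $E_{i,2j-1}\cap E_{i,2j}$ holds with probability at least $e^{-1}$. Multiplying over the $r_{i-1}$ independent coordinates and then chaining over $i\in[s]$ exactly as in Lemma~\ref{lemma:partition} gives $\P(E_s)\ge \exp(-n)$. Since $m_{s,i}=n_i\ge m$ and $1/m\ll\gamma$, on $E_s$ every part $V_{s,i}$ satisfies $\delta_d(H[V_{s,i}])\ge(\mu+\gamma/2)\binom{n_i}{k-d}$, so the resulting partition is $(\mathbf n,d,\mu+\gamma/2)$-good, and the claimed lower bound $e^{-n}\binom{n}{n_1,\dots,n_r}$ on the count follows from the distribution of the final partition. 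The main (and only genuinely new) obstacle is the McDiarmid step: one must check that the bounded-differences constant $O(m_{i-1,j}^{k-d-1})$ and the $O(m_{i-1,j})$ swap budget together still yield a tail bound strong enough to survive the union bound over all $d$-sets and to absorb the $O(m_{i-1,j}^{-1/4})$ slack; everything else is a routine transcription of Lemma~\ref{lemma:partition}.
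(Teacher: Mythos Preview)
Your approach is exactly what the paper intends: it explicitly says the proof is a minor modification of Lemma~\ref{lemma:partition}, replacing the hypergeometric bound by McDiarmid's inequality, and you have carried this out faithfully. The structure (iterated bisection, events $E_{i,j}$, independence across $j$ at each level, chaining $\P(E_i\mid E_{i-1})\ge e^{-r_{i-1}}$) is identical.

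Two small slips to clean up. First, with $X_U$ defined as the number of edges $e$ with $U\subseteq e\subseteq V_{i,2j-1}$, the Lipschitz constant you claim is wrong: if a transposition moves a vertex of $U$ itself across the cut, $X_U$ can jump from $\Theta(m_{i,2j-1}^{k-d})$ to $0$. The fix is the one you already gesture at: either condition on $U\subseteq V_{i,2j-1}$ first (so the randomness is a uniform $(m_{i,2j-1}-d)$-subset of $V_{i-1,j}\setminus U$, and a swap changes the count by at most $2\binom{m_{i-1,j}-d-1}{k-d-1}$), or instead track $Y_U:=|\{S\in\binom{V_{i,2j-1}\setminus U}{k-d}:U\cup S\in E(H)\}|$, which has the right Lipschitz constant unconditionally and equals $d_{H[V_{i,2j-1}]}(U)$ whenever $U\subseteq V_{i,2j-1}$. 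Second, the bound $m_{i-1,j}^d\le (5m)^d$ is only true at the final level; at intermediate levels $m_{i-1,j}$ can be as large as $n$. This does not matter, since $m_{i-1,j}\le 3m_{i,2j-1}$ and the polynomial factor $m_{i,2j-1}^{d}$ is still crushed by the $\exp(-\Omega(m_{i,2j-1}^{1/3}))$ tail, but the inequality as stated is false. With these corrections the argument goes through.
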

Recalling that $\mu_{k,d}(F)$ is defined at the end of Section~\ref{sec:diractype}, we are now ready for the proof of Theorem~\ref{thm:factors}.
\begin{proof}[Proof of Theorem~\ref{thm:factors}]Let $1\le d\le k-1$ and let $F$ be a fixed $k$-graph on $t$ vertices. Let $m$ be such that every $k$-graph on $m'\ge m$ vertices, with $t\mid m'$, and minimum $d$-degree at least $(\mu_{k,d}(F)+\gamma/2)\binom{m'}{k-d}$, contains an $F$-factor
 (using the definition of $\mu_{k,d}(F)$).
Let $n_0$ and $C$ be such that, for every $n\geq n_0$, $1/n\ll 1/C\ll 1/m$, and let $H$ be an $n$-vertex $k$-graph with $\delta_d(H)\geq (\mu_{k,d}(F)+\gamma)n^{k-d}$.
By Lemma~\ref{lemma:partition:2}, there is a tuple $\mathbf{n}=(n_1,\ldots, n_r)$, with $\sum_{i\in [r]}n_i=n$, and $m\le n_i\le 5m$ and $t\mid n_i$ for each $i\in [r]$, such that the number of $(\mathbf{n},d,\mu_{k,d}(F)+\gamma/2)$-good partitions of $V(H)$ is at least $e^{-n}\binom{n}{n_1,\ldots, n_r}$.

%For a partition $\mathcal P=(V_1,\ldots, V_r)$ of $V(H)$, say that an $F$-factor $G$ is $\mathcal P$-respecting if every copy of $F$ in $G$ is completely contained in some $V_i$. % and since $\delta_d(H[V_i])\ge (\mu_{k,d}(F)+\gamma/2)|V_i|^{k-d}$ for $i\in [r]$
For each $(\mathbf{n},d,\mu_{k,d}(F)+\gamma/2)$-good partition $\mathcal P=(V_1,\ldots, V_r)$, by the definition of $m$, there is an $F$-factor in $H[V_i]$ for each $i\in [r]$, and therefore $H$ has an $F$-factor, $G$ say, where $G[V_i]$ is an $F$-factor of $H[V_i]$ for each $i\in [r]$.
On the other hand, given an $F$-factor $G$, the number of possible partitions $\mathcal P=(V_1,\ldots, V_r)$, such that $G[V_i]$ is an $F$-factor of $H[V_i]$ and $|V_i|=n_i$ for each $i\in [r]$ is at most $(n/t)^{n/t}\leq \exp((n\log n)/t)$.
%\[\binom{{n}/{t}}{{n_1}/{t},\ldots,{n_r}/{t}}\le \frac{(n/t)^{n/t}}{(m/t)!^{r}}\le \frac{(n/t)^{n/t}}{(m/et)^{n/5t}}.\]
Therefore, the number of distinct $F$-factors in $H$ is (similarly to \eqref{eqn:callback}) at least
\[
e^{-n}\cdot \exp(-(n/t)\log n)\cdot \binom{n}{n_1,\ldots, n_r}
\ge e^{-n}\cdot \exp(-(n/t)\log n)\cdot
\frac{n!}{(5m)!^{n/m}}\geq \exp\big((1-\tfrac{1}t)n\log n-Cn\big),\]
as required, where we have used Stirling's formula and that $1/n\ll 1/C\ll 1/m$.
\end{proof}

%%%%%%%%%%%%%%%%%%%%%%%%%%%%%%%%%%%%%%%%%%%%%%%%%%%%%%%%%%%%%%%%%%%%%%%%%%%%%%%%%%%%%%%%%%%%%%%%%%%%%%%%%%%%%%%%%%%%%%%%%%%%%%%%%%%%%%%%%%%%%%%%%%%%%
%%%%%%%%%%%%%%%%%%%%%%%%%%%%%%%%%%%%%%%%%%%%%%%%%%%%%%%%%%%%%%%%%%%%%%%%%%%%%%%%%%%%%%%%%%%%%%%%%%%%%%%%%%%%%%%%%%%%%%%%%%%%%%%%%%%%%%%%%%%%%%%%%%%%%
%%%%%%%%%%%%%%%%%%%%%%%%%%%%%%%%%%%%%%%%%%%%%%%%%%%%%%%%%%%%%%%%%%%%%%%%%%%%%%%%%%%%%%%%%%%%%%%%%%%%%%%%%%%%%%%%%%%%%%%%%%%%%%%%%%%%%%%%%%%%%%%%%%%%%
%%%%%%%%%%%%%%%%%%%%%%%%%%%%%%%%%%%%%%%%%%%%%%%%%%%%%%%%%%%%%%%%%%%%%%%%%%%%%%%%%%%%%%%%%%%%%%%%%%%%%%%%%%%%%%%%%%%%%%%%%%%%%%%%%%%%%%%%%%%%%%%%%%%%%

\bibliographystyle{abbrv}
\bibliography{cc}

\begin{thebibliography}{10}

\bibitem{allen2022robust}
P.~Allen, J.~B{\"o}ttcher, J.~Corsten, E.~Davies, M.~Jenssen, P.~Morris,
  B.~Roberts, and J.~Skokan.
\newblock A robust {C}orr\'adi--{H}ajnal theorem.
\newblock {\em arXiv preprint arXiv:2209.01116}, 2022.

\bibitem{barber2016edge}
B.~Barber, D.~K{\"u}hn, A.~Lo, and D.~Osthus.
\newblock Edge-decompositions of graphs with high minimum degree.
\newblock {\em Adv. Math.}, 288:337--385, 2016.

\bibitem{bedenknecht2020squares}
W.~Bedenknecht and C.~Reiher.
\newblock Squares of {H}amiltonian cycles in 3-uniform hypergraphs.
\newblock {\em Random Structures Algorithms}, 56(2):339--372, 2020.

\bibitem{bollobas1995}
B.~Bolob\'as.
\newblock Extremal graph theory.
\newblock {\em Handbook of combinatorics}, pages 1231--1292, 1995.

\bibitem{bondy1995basic}
J.~A. Bondy.
\newblock Basic graph theory: paths and circuits.
\newblock {\em Handbook of combinatorics}, pages 3--110, 1995.

\bibitem{BST2009}
J.~B\"{o}ttcher, M.~Schacht, and A.~Taraz.
\newblock Proof of the bandwidth conjecture of {B}ollob\'{a}s and {K}oml\'{o}s.
\newblock {\em Math. Ann.}, 343(1):175--205, 2009.

\bibitem{CLNS10}
B.~Csaba, I.~Levitt, J.~Nagy-Gy\"{o}rgy, and E.~Szemeredi.
\newblock Tight bounds for embedding bounded degree trees.
\newblock In {\em Fete of combinatorics and computer science, Bolyai Math.
  Soc., Budapest}, volume~20, pages 95--137. 2010.

\bibitem{cuckler2009hamiltonian}
B.~Cuckler and J.~Kahn.
\newblock {H}amiltonian cycles in {D}irac graphs.
\newblock {\em Combinatorica}, 29(3):299--326, 2009.

\bibitem{daykin1981}
D.~E. Daykin and R.~H\"aggkvist.
\newblock Degrees giving independent edges in a hypergraph.
\newblock {\em Bull. Aust. Math. Soc.}, 23(1):103–109, 1981.

\bibitem{Dirac}
G.~A. Dirac.
\newblock Some theorems on abstract graphs.
\newblock {\em Proc. Lond. Math. Soc.}, s3-2(1):69--81, 1952.

\bibitem{ferber2021counting}
A.~Ferber, L.~Hardiman, and A.~Mond.
\newblock Counting {H}amiltonian cycles in {D}irac hypergraphs.
\newblock {\em Combinatorica}, 2023.

\bibitem{ferber2016counting}
A.~Ferber, M.~Krivelevich, and B.~Sudakov.
\newblock Counting and packing {H}amilton {$\ell$}-cycles in dense hypergraphs.
\newblock {\em J. Comb.}, 7(1):135--157, 2016.

\bibitem{glock2021counting}
S.~Glock, S.~Gould, F.~Joos, D.~K{\"u}hn, and D.~Osthus.
\newblock Counting {H}amilton cycles in {D}irac hypergraphs.
\newblock {\em Combin. Probab. Comput.}, 30(4):631--653, 2021.

\bibitem{gupta2022general}
P.~Gupta, F.~Hamann, A.~M{\"u}yesser, O.~Parczyk, and A.~Sgueglia.
\newblock A general approach to transversal versions of {D}irac-type theorems.
\newblock {\em Bull. Lond. Math. Soc.}, 2023.

\bibitem{hajnal1970proof}
A.~Hajnal and E.~Szemer{\'e}di.
\newblock Proof of a conjecture of {P.\ }{E}rd{\H{o}}s.
\newblock {\em Combin. Theory Appl.}, pages 601--623, 1970.

\bibitem{janson_1994}
S.~Janson.
\newblock The numbers of spanning trees, {H}amilton cycles and perfect
  matchings in a random graph.
\newblock {\em Combin. Probab. Comput.}, 3(1):97–126, 1994.

\bibitem{JLR2000}
S.~Janson, T.~\L~uczak, and A.~Rucinski.
\newblock {\em Random graphs}.
\newblock Wiley-Interscience Series in Discrete Mathematics and Optimization.
  Wiley-Interscience, New York, 2000.

\bibitem{kang2022perfect}
D.~Y. Kang, T.~Kelly, D.~K{\"u}hn, D.~Osthus, and V.~Pfenninger.
\newblock Perfect matchings in random sparsifications of {D}irac hypergraphs.
\newblock {\em arXiv preprint arXiv:2211.01325}, 2022.

\bibitem{KSS95}
J.~Koml\'{o}s, G.~N. S\'{a}rk\"{o}zy, and E.~Szemer\'{e}di.
\newblock Proof of a packing conjecture of {B}ollob\'{a}s.
\newblock {\em Combin. Probab. Comput.}, 4(3):241--255, 1995.

\bibitem{KSS1998b}
J.~Koml\'{o}s, G.~N. S\'{a}rk\"{o}zy, and E.~Szemer\'{e}di.
\newblock Proof of the {S}eymour conjecture for large graphs.
\newblock {\em Ann. Comb.}, 2(1):43--60, 1998.

\bibitem{KSS2001}
J.~Koml\'{o}s, G.~N. S\'{a}rk\"{o}zy, and E.~Szemer\'{e}di.
\newblock Spanning trees in dense graphs.
\newblock {\em Combin. Probab. Comput.}, 10(5):397--416, 2001.

\bibitem{KSS98}
J.~Komlós, G.~N. Sárközy, and E.~Szemerédi.
\newblock On the {P}{\'o}sa-{S}eymour conjecture.
\newblock {\em J. of Graph Theory}, 29(3):167--176, 1998.

\bibitem{kuhn2010hamilton}
D.~K{\"u}hn, R.~Mycroft, and D.~Osthus.
\newblock {H}amilton $\ell$-cycles in uniform hypergraphs.
\newblock {\em J. Combin. Theory Ser. A}, 117(7):910--927, 2010.

\bibitem{kuhn2009embedding}
D.~K{\"u}hn and D.~Osthus.
\newblock Embedding large subgraphs into dense graphs.
\newblock In {\em London Math. Soc. Lecture Notes 365, Cambridge University
  Press}, 2009.

\bibitem{kuhn2009minimum}
D.~K{\"u}hn and D.~Osthus.
\newblock The minimum degree threshold for perfect graph packings.
\newblock {\em Combinatorica}, 29(1):65--107, 2009.

\bibitem{kuhn2014hamilton}
D.~K{\"u}hn and D.~Osthus.
\newblock {H}amilton cycles in graphs and hypergraphs: An extremal perspective.
\newblock In {\em 2014 International Congress of Mathematicans, ICM 2014},
  pages 381--406, 2014.

\bibitem{McDiarmid1989}
C.~McDiarmid.
\newblock On the method of bounded differences.
\newblock In {\em Surveys in Combinatorics, 1989: Invited Papers at the Twelfth
  British Combinatorial Conference}, pages 148--188, Cambridge, 1989. Cambridge
  University Press.

\bibitem{pavez2021towards}
M.~Pavez-Sign{\'e}, N.~Sanhueza-Matamala, and M.~Stein.
\newblock Towards a hypergraph version of the {P}{\'o}sa-{S}eymour conjecture.
\newblock {\em Adv. Comb., 2023:3, 29 pp.}, 2023.

\bibitem{pham2023toolkit}
H.~T. Pham, A.~Sah, M.~Sawhney, and M.~Simkin.
\newblock A toolkit for robust thresholds, 2023.

\bibitem{rodl2008approximate}
V.~R{\"o}dl, E.~Szemer{\'e}di, and A.~Ruci{\'n}ski.
\newblock An approximate {D}irac-type theorem for k-uniform hypergraphs.
\newblock {\em Combinatorica}, 28(2):229--260, 2008.

\bibitem{simonovits2019embedding}
M.~Simonovits and E.~Szemer{\'e}di.
\newblock Embedding graphs into larger graphs: results, methods, and problems.
\newblock {\em Building Bridges II: Mathematics of L{\'a}szl{\'o} Lov{\'a}sz},
  pages 445--592, 2019.

\bibitem{SARKOZY2003237}
G.~N. Sárközy, S.~M. Selkow, and E.~Szemerédi.
\newblock On the number of {H}amiltonian cycles in {D}irac graphs.
\newblock {\em Discrete Math.}, 265(1):237--250, 2003.

\bibitem{zhao2016recent}
Y.~Zhao.
\newblock Recent advances on {D}irac-type problems for hypergraphs.
\newblock {\em Recent trends in combinatorics}, pages 145--165, 2016.

\end{thebibliography}

%\printbibliography

%%%%%%%%%%%%%%%%%%%%%%%%%%%%%%%%%%%%%%%%%%%%%%%%%%%%%%%%%%%%%%%%%%%%%%%%%%%%%%%%%%%%%%%%%%%%%%%%%%%%%%%%%%%%%%%%%%%%%%%%%%%%%%%%%%%%%%%%%%%%%%%%%%%%%
%%%%%%%%%%%%%%%%%%%%%%%%%%%%%%%%%%%%%%%%%%%%%%%%%%%%%%%%%%%%%%%%%%%%%%%%%%%%%%%%%%%%%%%%%%%%%%%%%%%%%%%%%%%%%%%%%%%%%%%%%%%%%%%%%%%%%%%%%%%%%%%%%%%%%
%%%%%%%%%%%%%%%%%%%%%%%%%%%%%%%%%%%%%%%%%%%%%%%%%%%%%%%%%%%%%%%%%%%%%%%%%%%%%%%%%%%%%%%%%%%%%%%%%%%%%%%%%%%%%%%%%%%%%%%%%%%%%%%%%%%%%%%%%%%%%%%%%%%%%
%%%%%%%%%%%%%%%%%%%%%%%%%%%%%%%%%%%%%%%%%%%%%%%%%%%%%%%%%%%%%%%%%%%%%%%%%%%%%%%%%%%%%%%%%%%%%%%%%%%%%%%%%%%%%%%%%%%%%%%%%%%%%%%%%%%%%%%%%%%%%%%%%%%%%

\appendix
\section{Proof of Lemmas~\ref{lemma:connected} and~\ref{lemma:connected:cliques}}
Here we provide a proof for Lemma~\ref{lemma:connected}. The proof is a straightforward modification of the original argument by K\"uhn, Mycroft and Osthus~\cite{kuhn2010hamilton} for finding Hamilton $\ell$-cycles in dense hypergraphs via the \textit{absorption method}. 
\begin{definition}Let $1\le \ell<k$ and let $H$ be a $k$-graph. Say that an $\ell$-path $P$ in $H$, with ends $\mathbf{a},\mathbf{b}\in (V(H))_\ell$,  can \textit{absorb} a collection of pairwise disjoint $(k-\ell)$-sets $S_1,\ldots, S_t$ if
\begin{enumerate}
    \item $P$ contains no vertex from $\bigcup_{i\in [t]}S_i$, and
    \item there is an $\ell$-path $Q$ with vertex set $V(Q)=V(P)\cup \bigcup_{i\in [t]}S_i$ and with ends $\mathbf{a}$ and $\mathbf{b}$.
\end{enumerate}
In this case, we say that $P$ is an \textit{absorbing path} for $S_1,\ldots,S_t$.\end{definition}
We may use a similar definition of absorbing paths that works for powers of tight cycles (for Lemma~\ref{lemma:connected:cliques}), in which case the absorbing paths are tight paths in the $t$-clique graph $K_t(H)$ that can absorb vertices rather than $(k-\ell)$-sets (see Definition 7.1 in~\cite{pavez2021towards}).
\begin{definition}Let $1\le \ell<k$ and let $H$ be an $n$-vertex $k$-graph. Say that a $(k-\ell)$-set $S\subset V(H)$ is \textit{$(\beta,t)$-good} if $H$ contains at least $\beta n^t$ absorbing paths for $S$, each with exactly $t$ vertices. If $S$ is not $(\beta,t)$-good, we then say that $S$ is \textit{$(\beta,t)$-bad}. \end{definition}
The following result states that most $(k-\ell)$-sets are good in $k$-graphs with linear minimum codegree. 
\begin{lemma}[Lemma~6.2 in~\cite{kuhn2010hamilton}]\label{lemma:badsets} Let $k\ge 3$ and $1\le \ell<k$ satisfy $(k-\ell)\nmid k$, and let $1/n\ll \beta \ll\theta\ll\mu ,1/k$. There exists a constant $t=t(k,\ell)$ such that if $H$ is an $n$-vertex $k$-graph with $\delta(H)\ge \mu n$, then the number of  $(\beta,t)$-bad $(k-\ell)$-sets in $H$ is at most $\theta n^{k-\ell}$.
\end{lemma}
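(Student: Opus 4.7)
The plan is to follow an absorbing-method argument in the spirit of R\"odl, Ruci\'nski and Szemer\'edi. For each $(k-\ell)$-set $S$ I want to count $\ell$-paths on $t$ vertices disjoint from $S$ such that adding $S$ produces another $\ell$-path with the same ends. To organize this count, I would first exhibit a fixed \emph{absorbing template}: a rooted combinatorial structure $T$ on $t=t(k,\ell)$ vertices, equipped with two designated $\ell$-paths sharing the same pair of ends --- one whose vertex set is $V(T)$, and one whose vertex set is $V(T)\cup\{s_1,\ldots,s_{k-\ell}\}$ for any extra $(k-\ell)$-slot. Then every embedding of $T$ into $H$ whose slot is occupied by $S$ yields an absorbing path for $S$.

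The first main step is to construct such a template. When $(k-\ell)\mid k$ a single short $\ell$-path already works, because any internal $(k-\ell)$-block can be swapped out for $S$. In the non-divisibility case $(k-\ell)\nmid k$, however, one has to chain together roughly $\lceil k/(k-\ell)\rceil$ short swap gadgets, so that after several rotations the alignment on both sides of the hole works out. This is the main combinatorial subtlety and is where the hypothesis $(k-\ell)\nmid k$ really enters; once the construction is fixed, $t$ is a constant depending only on $k$ and $\ell$.

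Given $T$, for each $(k-\ell)$-set $S$ I would count embeddings of its $t$ template vertices into $H\setminus S$ that realize all of the prescribed edges. Embedding the vertices one at a time, each new vertex must lie in a specified $k$-edge together with $k-1$ already-placed vertices (possibly including some $s_i$'s). By the codegree hypothesis $\delta(H)\ge\mu n$, there are at least $(\mu-o(1))n$ valid extensions at each step, yielding at least $\alpha n^t$ absorbing paths per $S$, for some $\alpha=\alpha(\mu,k,\ell)>0$, provided no step collapses because its available neighbours coincide entirely with previously placed vertices or with $S$.

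Finally, I would bound the number of $(\beta,t)$-bad sets. A set $S$ can be bad only if one of the $O(1)$ prescribed $(k-1)$-sets of the template involving some $s_i$'s has its neighbourhood in $H$ concentrated almost entirely on an exceptionally small region, forcing many embedding attempts to collapse. A union bound over the constantly many such template $(k-1)$-sets, combined with a first-moment estimate that exploits $\beta\ll\theta\ll\mu$, shows that at most $\theta n^{k-\ell}$ sets $S$ can exhibit this concentration defect. The hard part of the whole argument is the chained template construction in the non-divisibility case; once $T$ is fixed, the counting step and the bad-set bound reduce to routine applications of the minimum codegree condition.
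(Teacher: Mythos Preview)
The paper does not prove this lemma; it is cited as Lemma~6.2 of K\"uhn, Mycroft and Osthus~\cite{kuhn2010hamilton} and used as a black box in the appendix, so there is no in-paper argument to compare against.

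Evaluated on its own, your outline has the right architecture but a genuine gap at the end. In your embedding step you assert that each template vertex, placed in turn, is constrained by a \emph{single} $(k-1)$-set of already-placed vertices, so that $\delta(H)\ge\mu n$ gives $(\mu-o(1))n$ choices every time and hence $\alpha n^t$ absorbers for \emph{every} $S$; if that held there would be no bad sets at all and your final step would be superfluous. Your proposed explanation for why some $S$ are bad --- that a template $(k-1)$-set through $S$ ``has its neighbourhood concentrated almost entirely on an exceptionally small region'' --- is ruled out by the very hypothesis $\delta(H)\ge\mu n$, which forces every $(k-1)$-set to have at least $\mu n$ neighbours. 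The actual obstruction is that in the non-divisibility case the chained template you allude to cannot be ordered so that each new vertex completes only one edge: the two $\ell$-paths on $V(T)$ and on $V(T)\cup S$ share their ends but diverge in the middle, so some template vertex lies in an edge of each and hence must land in the intersection of two codegree neighbourhoods, which for arbitrarily small $\mu>0$ may be empty. K\"uhn, Mycroft and Osthus handle this not by the codegree bound alone but by a counting/averaging argument over short $\ell$-paths in $H$, showing the required intersections are large for all but $\theta n^{k-\ell}$ choices of $S$; this averaging step, which is where the hierarchy $\beta\ll\theta\ll\mu$ is genuinely used, is the missing ingredient in your sketch.
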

The next lemma says that we can find a short path $P$ which can absorb any collection of $o(n)$ pairwise disjoint good sets and, moreover, every vertex outside $P$ belongs to only few bad sets.  
\begin{lemma}[Lemma 6.3 in~\cite{kuhn2010hamilton}]\label{lemma:absorbing}Let $k\geq 3$ and $1\leq \ell<k$ satisfy $(k-\ell)\nmid k$, and let $1/n\ll \alpha \ll \beta\ll \theta\ll \mu,1/k$. If $H$ is an $n$-vertex $k$-graph with $\delta(H)\geq \mu n$ and $t=t(k,\ell)$ from Lemma~\ref{lemma:badsets}, then $H$ contains an $\ell$-path $P$ on at most $\mu n$ vertices such that 
\begin{enumerate}
\item every vertex of $H-V(P)$ lies in at most $\theta n^{k-\ell-1}$ $(\beta,t)$-bad $(k-\ell)$-sets, and
\item $P$ can absorb any collection of at most $\alpha n$ disjoint $(\beta,t)$-good $(k-\ell)$-sets of vertices of $H-V(P)$.
\end{enumerate}
\end{lemma}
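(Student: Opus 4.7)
The plan is to adapt the standard absorption template: randomly sample a small family $\mathcal F$ of short absorbing $\ell$-paths so that every $(\beta,t)$-good $(k-\ell)$-set is absorbed by many members of $\mathcal F$, then stitch the paths of $\mathcal F$ into one $\ell$-path $P$ of size at most $\mu n$ using the minimum codegree to build short connecting segments. Property~(1) is secured by forcing every vertex that sits in too many bad $(k-\ell)$-sets into $V(P)$ along the way.

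For the sampling, include each $t$-vertex absorbing $\ell$-path of $H$ independently with a common probability $p$ chosen so that $p\cdot n^{t-1}$ is a small positive constant, roughly $\alpha^{1/2}$. Since each $(\beta,t)$-good $S$ has at least $\beta n^t$ absorbers on $t$ vertices, linearity gives $\mathbb E|\mathcal F|=O(\alpha^{1/2}n)$ and, for every good $S$, $\mathbb E|\{A\in\mathcal F:A\text{ absorbs }S\}|\geq\beta\alpha^{1/2}n$. The expected number of vertex-intersecting pairs in $\mathcal F$ is $O(p^2 n^{2t-1})=O(\alpha n)$, a lower-order term compared with the absorber count for any fixed good $S$. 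Chernoff bounds for each good $S$ combined with a union bound over the $O(n^{k-\ell})$ choices of $S$, together with Markov for the intersection count, yield (with positive probability) a family $\mathcal F$ from which deleting one path per intersecting pair produces a pairwise disjoint subfamily $\mathcal F'$ in which every good $S$ still has at least $\alpha n$ absorbers. This delivers property~(2): given any $\leq\alpha n$ disjoint good sets, greedily assign to each one a distinct absorber in $\mathcal F'$ and absorb them one at a time.

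For property~(1), apply Lemma~\ref{lemma:badsets} with parameter $\theta'\ll\theta$ to bound the total number of bad $(k-\ell)$-sets by $\theta' n^{k-\ell}$; double counting then shows that the set $B$ of vertices incident to more than $\theta n^{k-\ell-1}$ bad sets satisfies $|B|\leq(k-\ell)\theta' n/\theta\ll\mu n$. Every vertex outside $B\cup V(\mathcal F')$ already lies in at most $\theta n^{k-\ell-1}$ bad sets, so it suffices to join the paths in $\mathcal F'$ and the vertices of $B$ into a single $\ell$-path of length at most $\mu n$. For this, invoke the standard $\ell$-path connecting subroutine for dense $k$-graphs (as in~\cite{kuhn2010hamilton}): any two disjoint $\ell$-tuples can be joined by an $\ell$-path of length $O(1)$ that avoids any prescribed set of sublinear size. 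Apply it repeatedly to link the ends of consecutive members of $\mathcal F'$, threading the vertices of $B$ into the connecting segments one at a time via a single extension step each.

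The main obstacle is this connecting subroutine, which for $(k-\ell)\nmid k$ cannot be obtained by an elementary Dirac-type argument because the codegree bound $\mu n$ may be well below $n/2$. The resolution is the iterated common-neighbourhood extension of K\"uhn, Mycroft and Osthus, exploiting the $\ell$-path structure; this is essentially the same ingredient required to prove Lemma~\ref{lemma:connected} itself, so no new tools are needed once Lemma~\ref{lemma:connected} is in hand. The remaining verifications — the bound $|V(P)|\leq\mu n$ by summing absorber, connector and $B$-vertex counts, and property~(2) by the greedy assignment above — are routine probabilistic bookkeeping.
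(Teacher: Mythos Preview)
The paper does not prove this lemma; it is quoted as Lemma~6.3 of K\"uhn, Mycroft and Osthus~\cite{kuhn2010hamilton} and used as a black box in the appendix proof of Lemma~\ref{lemma:connected}. There is therefore nothing in the paper to compare your argument against.

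That said, your sketch is essentially the standard absorption argument from~\cite{kuhn2010hamilton}: random sparsification of $t$-vertex absorbers with $p\approx\alpha^{1/2}n^{-(t-1)}$, Chernoff plus Markov to pass to a disjoint family in which every good $(k-\ell)$-set retains $\geq\alpha n$ absorbers, then concatenation into a single $\ell$-path using the short connecting lemma (stated in the present paper as Lemma~\ref{lemma:diameter}, i.e., Corollary~5.4 of~\cite{kuhn2010hamilton}), while sweeping the $O(\theta'n/\theta)$ vertices of high bad-degree into the connectors. The parameter arithmetic you give is correct.

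One remark on phrasing: you write that the connecting subroutine comes for free ``once Lemma~\ref{lemma:connected} is in hand''. In this paper the dependence runs the other way --- Lemma~\ref{lemma:connected} is \emph{derived from} Lemma~\ref{lemma:absorbing} --- so invoking it here would be circular. What you actually need (and what your argument in fact uses) is the stand-alone connecting statement Lemma~\ref{lemma:diameter}, which is proved in~\cite{kuhn2010hamilton} independently of any absorption. With that correction the sketch is sound; the step of threading each $v\in B$ through a connector deserves a sentence of justification (extend an end by one edge through $v$ using $\delta(H)\geq\mu n$, then apply Lemma~\ref{lemma:diameter} from the new end), but this is routine.
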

For proving Lemma~\ref{lemma:connected:cliques}, we can show an analogue result in the spirit of Lemma~\ref{lemma:absorbing} for the $t$-clique graph $K_t(H)$ using Lemma 7.3 in~\cite{pavez2021towards} and \textit{Step 1} in the proof of Theorem 1.1 in~\cite{pavez2021towards}. 

Next, we have a lemma for when any two disjoint ordered $\ell$-sets can be connected by a short $\ell$-path, as follows.

\begin{lemma}[Corollary 5.4 in~\cite{kuhn2010hamilton}]\label{lemma:diameter}
Let $k\geq 3$ and $1\leq \ell\leq k-1$ satisfy $(k-\ell)\nmid k$, and let $1/n\ll \mu,1/k$. If $H$ is an $n$-vertex $k$-graph with $\delta(H)\geq \mu n$, then for any two disjoint ordered $\ell$-sets $\mathbf{a},\mathbf{b}\in (V(H))_\ell$ there exists an $\ell$-path $P$ with ends $\mathbf{a}$ and $\mathbf{b}$ that contains at most $8k^5$ vertices.
\end{lemma}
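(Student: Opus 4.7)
The plan is to prove Lemma~\ref{lemma:diameter} by a greedy extension from both ends of the target $\ell$-path, followed by a short bridging step that joins the two partial paths. The key building block is the following one-step extension: given the current end $\mathbf{v}$ of a partial $\ell$-path and a forbidden set $W \subseteq V(H)$ with $|W| = O(k^5)$, one can adjoin $k-\ell$ vertices from $V(H) \setminus W$ forming a new edge with $\mathbf{v}$. Indeed, a double-counting over $(k-1)$-supersets of $\mathbf{v}$ combined with $\delta(H) \geq \mu n$ shows that there are $\Omega(n^{k-\ell})$ edges of $H$ containing $\mathbf{v}$, and only $O(k^5 \cdot n^{k-\ell-1})$ of them use a vertex of $W$, so plenty of valid extensions remain.

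Starting from $\mathbf{a}$, with $W$ initialised as $\mathbf{a} \cup \mathbf{b}$ and updated after each extension, I would iterate this one-step extension for $s_a = O(k^4)$ steps, obtaining a short $\ell$-path $P_a$ ending at some $\ell$-tuple $\mathbf{a}'$; symmetrically I build a disjoint $P_b$ from $\mathbf{b}$ ending at $\mathbf{b}'$. A straightforward inductive count shows that, over all choices made in the greedy procedure, the set of reachable end-tuples $\mathbf{a}'$ comprises a positive fraction of all $\ell$-tuples in $V(H)$, and likewise for $\mathbf{b}'$.

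It then remains to bridge $\mathbf{a}'$ and $\mathbf{b}'$ by a short $\ell$-path $Q$ of a length satisfying $(k-\ell) \mid (|Q| - \ell)$, so that concatenating $P_a$, $Q$ and the reverse of $P_b$ is a valid $\ell$-path connecting $\mathbf{a}$ and $\mathbf{b}$. Because a positive fraction of $\ell$-tuples are candidates for each of $\mathbf{a}'$ and $\mathbf{b}'$, an averaging argument combined with the codegree lower bound produces a pair of candidates together with a bridging configuration of bounded length.

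The main obstacle is this divisibility constraint: when $(k-\ell) \nmid k$, a single bridging edge has rigid length $\ell + (k-\ell)$ and cannot by itself attain every residue of $|Q| - \ell$ modulo $k-\ell$, so the bridge must be assembled from a short chain of edges whose cumulative length is tunable to hit the required residue. The hypothesis $(k-\ell) \nmid k$ is precisely what enables such a residue-flexible chain of bounded size (one constructs gadgets from roughly $\lceil k/(k-\ell) \rceil$ overlapping edges and varies overlap patterns to shift the length modulo $k-\ell$). The $8k^5$ bound on $|P|$ comfortably absorbs the worst-case total of the two greedy extensions together with this bridging chain.
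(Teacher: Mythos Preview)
This lemma is not proved in the paper at all: it is quoted as Corollary~5.4 of K\"uhn, Mycroft and Osthus and used as a black box in the appendix. So there is no in-paper argument to compare your sketch against, and the question is simply whether your outline would go through.

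The greedy one-step extension is fine, and your claim that after $O(k)$ steps a positive proportion of all ordered $\ell$-tuples are reachable as end-tuples can indeed be justified by counting paths and dividing by the maximal number of paths sharing a fixed end. The real content of the lemma, however, is the bridging step, and here your sketch has a genuine gap. Asserting that ``an averaging argument combined with the codegree lower bound produces a pair of candidates together with a bridging configuration'' is not a proof: having positive-density reachable sets on both sides does not by itself yield a short $\ell$-path between some pair in them. In fact the statement becomes \emph{false} for arbitrarily small $\mu>0$ once one drops the hypothesis $(k-\ell)\nmid k$. For instance, with $k=4$, $\ell=2$, take $V=A\cup B$ with $|A|=|B|=n/2$ and let the edges be all $4$-sets meeting $A$ in an even number of vertices; this has minimum codegree $n/2-3$, yet an end-tuple with both vertices in the same class can never be $\ell$-connected to an end-tuple with one vertex in each class. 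So the hypothesis $(k-\ell)\nmid k$ must enter in an essential way precisely at the bridging step, and your outline does not explain how.

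Moreover, your diagnosis of \emph{what} that hypothesis is doing is off. There is no length--residue obstruction to worry about: every $\ell$-path $Q$ automatically has $(k-\ell)\mid(|Q|-\ell)$, and concatenating $\ell$-paths at shared $\ell$-ends always produces an $\ell$-path of admissible length, so nothing needs to be ``tuned to hit a residue''. The role of $(k-\ell)\nmid k$ is structural rather than arithmetic: it is what rules out parity-type colourings like the one above that would otherwise disconnect the reachability relation on $\ell$-tuples. Making that precise is exactly the work carried out in~\cite{kuhn2010hamilton}, and it is the missing ingredient in your proposal.
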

We have a similar statement in the vein of Lemma~\ref{lemma:diameter} for the clique graph (see~\cite[Lemma 4.1]{pavez2021towards}), which states that any two disjoint $(t-1)$-sets that support $(t-1)$-cliques in $H$ can be connected in $K_t(H)$ by many short tight paths.

The last two ingredients we need are, firstly, that the degree conditions are preserved by taking random subsets and,  secondly, that hypergraphs with large vertex degree have perfect matchings.  
\begin{lemma}[Lemma 8.1 in~\cite{kuhn2010hamilton}]\label{lemma:reservoir}
Let $1\leq d<k$ and let $1/n\ll \alpha,\mu,1/k$. Let $H$ be an $n$-vertex $k$-graph with $\delta_d(H)\geq \mu \binom{n}{k-d}$, and let $R\subset V(H)$ be a random subset of size $\alpha n$. Then, with probability $1-o(1)$, we have  $|N_H(S)\cap \binom{R}{k-d}|\geq \mu \binom{\alpha n}{k-d}-n^{k-d-\frac{1}{3}}$ for every $S\in \binom{V(H)}{d}$.
\end{lemma}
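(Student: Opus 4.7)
The plan is to fix an arbitrary $d$-set $S\in\binom{V(H)}{d}$, show that $X_S:=|N_H(S)\cap\binom{R}{k-d}|$ is tightly concentrated about its mean, and then take a union bound over the at most $n^d$ choices of $S$.

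For the expectation, each fixed $(k-d)$-set $T\in N_H(S)$ lies in $R$ with probability
\[\P(T\subseteq R)=\binom{n-(k-d)}{\alpha n-(k-d)}\bigg/\binom{n}{\alpha n}=\frac{\binom{\alpha n}{k-d}}{\binom{n}{k-d}},\]
so summing over $T\in N_H(S)$ and using $d_H(S)\ge \mu\binom{n}{k-d}$ gives $\E X_S\ge\mu\binom{\alpha n}{k-d}$.

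For concentration, since $R$ is sampled without replacement, the vertex-indicators are not independent, so I would realise $R$ as the first $\alpha n$ coordinates of a uniformly random permutation $\sigma$ of $V(H)$ and regard $X_S$ as a function of $\sigma$. A transposition of two coordinates of $\sigma$ either leaves $R$ unchanged (both positions inside or both outside the first $\alpha n$) or replaces a single vertex $v\in R$ by some $v'\notin R$; in the latter case the only $(k-d)$-subsets that can enter or leave $\binom{R}{k-d}$ are those containing $v$ or $v'$, so $X_S$ changes by at most $2\binom{n-1}{k-d-1}\le 2n^{k-d-1}$. Applying a bounded-differences (McDiarmid/Maurey) inequality for Lipschitz functions of random permutations, with $n$ transposition steps and step size $c=O(n^{k-d-1})$, gives
\[\P\bigl(|X_S-\E X_S|\ge n^{k-d-1/3}\bigr)\le 2\exp\bigl(-\Omega(n^{1/3})\bigr),\]
because $t^2/(nc^2)=\Omega(n^{1/3})$ when $t=n^{k-d-1/3}$.

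A union bound over the $\binom{n}{d}\le n^d$ choices of $S$ then bounds the total failure probability by $n^d\cdot 2\exp(-\Omega(n^{1/3}))=o(1)$, since $d$ and $k$ are constants and $1/n\ll 1/k$. Combined with the lower bound on $\E X_S$, this proves the lemma. The only real subtlety is the choice of concentration tool: the without-replacement sampling rules out a naive Chernoff on independent indicators, which is why one passes to a random permutation (equivalently a Doob martingale along the sampling order) in order to control single-swap differences; once that bound is in hand, the remaining calculations are routine.
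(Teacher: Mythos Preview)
The paper does not give its own proof of this lemma; it is quoted verbatim as Lemma~8.1 of K\"uhn, Mycroft and Osthus~\cite{kuhn2010hamilton} and used as a black box. Your argument is correct and is essentially the standard one: linearity of expectation gives $\E X_S\ge\mu\binom{\alpha n}{k-d}$, a bounded-differences inequality for functions of a random permutation (equivalently, Azuma along the vertex-exposure martingale) gives $\P(|X_S-\E X_S|\ge n^{k-d-1/3})\le\exp(-\Omega(n^{1/3}))$ from the swap-Lipschitz constant $O(n^{k-d-1})$, and a union bound over the $O(n^d)$ sets $S$ finishes.
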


\begin{theorem}[Perfect matching theorem~\cite{daykin1981}]\label{theorem:matching}Let $n,k\geq 2$ such that $k\mid n$. If $H$ is an $n$-vertex $k$-graph with $\delta_1(H)\ge\frac{k-1}{k}(\binom{n-1}{k-1}-1)$, then $H$ contains a perfect matching.
\end{theorem}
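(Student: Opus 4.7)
The proof will be by contradiction. Assume $H$ satisfies the degree hypothesis but has no perfect matching, and let $M$ be a matching of $H$ of maximum size. Set $U := V(H) \setminus V(M)$; since $k \mid n$ and $|M| < n/k$, we have $|U| \ge k$.

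First, I would extract a basic consequence of the maximality of $M$: for every $u \in U$, every edge of $H$ containing $u$ must meet $V(M)$, since otherwise the edge lies in $\{u\} \cup (U \setminus \{u\})$ and could be added to $M$, contradicting maximality. Consequently,
\[
d_H(u) \le \binom{n-1}{k-1} - \binom{|U|-1}{k-1},
\]
and combined with the hypothesis this yields $\binom{|U|-1}{k-1} \le \tfrac{1}{k}\binom{n-1}{k-1} + \tfrac{k-1}{k}$, giving a first-pass upper bound on $|U|$ (but not yet a contradiction, since the bound is not tight).

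The heart of the proof is an exchange argument. Call an \emph{augmenting exchange} a choice of distinct $u_1, u_2 \in U$, an edge $f \in M$, a vertex $x \in f$, and an edge $e \in E(H)$ with $u_1 \in e$, $e \cap V(M) = \{x\}$, $e \setminus \{u_1, x\} \subseteq U \setminus \{u_2\}$, and $(f \setminus \{x\}) \cup \{u_2\} \in E(H)$. If such a configuration exists, then $(M \setminus \{f\}) \cup \{e,\, (f \setminus \{x\}) \cup \{u_2\}\}$ is a matching of size $|M|+1$, contradicting maximality. The task thereby reduces to showing that the degree hypothesis forces an augmenting exchange whenever $|U| \ge k$.

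To complete the argument I would double-count. On one side, summing $d_H(u)$ over $u \in U$ gives many edges $e$ with $|e \cap U| \ge 1$, and using the maximality constraint refines this to count many edges with $|e \cap V(M)| = 1$ and the remaining $k-1$ vertices in $U$; these are exactly the candidate edges $e$ for an augmenting exchange. On the other side, for each $f \in M$ admitting no augmenting exchange, every $(k-1)$-set of the form $(f \setminus \{x\}) \cup \{u_2\}$ with $x \in f, u_2 \in U$ is a non-edge, forcing many non-edges incident to the vertices of $f$ and hence giving, via the hypothesis applied at these vertices, an upper bound on the number of candidate edges. Balancing these two counts is where the constant $\tfrac{k-1}{k}$ emerges; the $-1$ inside $\binom{n-1}{k-1}-1$ provides just enough slack to absorb lower-order terms so that the inequality is strictly violated when $|U| \ge k$. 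The main obstacle is calibrating the double count so that the $\tfrac{k-1}{k}$ factor drops out cleanly, because it must track both the density of edges between $U$ and $V(M)$ and the fraction of $(k-1)$-sets through each $x \in V(M)$ that would otherwise augment $M$.
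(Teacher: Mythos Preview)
This theorem is not proved in the paper; it is quoted from Daykin and H\"aggkvist (1981) as a black box for the appendix. So there is no in-paper argument to compare against, and your task is really to reproduce (or replace) the original proof.

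Your framework---take a maximum matching $M$, set $U=V(H)\setminus V(M)$, and look for an augmenting exchange---is the standard one, and the exchange you describe does augment $M$ when it exists. The gap is the sentence ``using the maximality constraint refines this to count many edges with $|e\cap V(M)|=1$ and the remaining $k-1$ vertices in $U$.'' Maximality tells you only that no edge of $H$ lies entirely in $U$; it gives no lower bound whatsoever on edges meeting $V(M)$ in exactly one vertex. In the critical case $|U|=k$ there are only $\binom{|U|}{k-1}\cdot|V(M)|=k(n-k)$ \emph{potential} edges of your candidate shape, whereas the degree hypothesis concerns all $\binom{n-1}{k-1}$ potential edges through a given vertex; for $k\ge 3$ the former is a vanishing fraction of the latter, so the hypothesis cannot force even a single candidate edge to exist. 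Your double count, as sketched, therefore has nothing on the ``many candidate edges'' side to balance against, and you yourself flag that the calibration has not been carried out.

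The Daykin--H\"aggkvist count runs in the opposite direction. One fixes $k$ uncovered vertices $u_1,\dots,u_k\in U$ and works with the $k$-sets of the form $(f\setminus\{x\})\cup\{u_i\}$ as $f$ ranges over $M$, $x$ over $f$, and $i$ over $[k]$; the degree hypothesis is read as an \emph{upper} bound on how many of these can be non-edges, and one shows that some single $f\in M$ admits enough of them (with distinct $u_i$'s) to augment. This bypasses the need for your candidate edges entirely. To make your outline go through you would need to rebuild the counting around sets of that form rather than around edges with $k-1$ vertices in $U$.
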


Now we are ready for the proof of Lemma~\ref{lemma:connected}.
\begin{proof}[Proof of Lemma~\ref{lemma:connected}]
 We begin by choosing constants
    \[1/n\ll \alpha\ll \beta\ll \theta\ll \theta'\ll \gamma \ll \mu\ll1/k,\]
and let $t=t(k,\ell)$ from Lemma~\ref{lemma:badsets}.  Given disjoint $\ell$-tuples $\mathbf{a}=(a_1,\ldots, a_\ell)$ and $\mathbf{b}=(b_1,\ldots, b_\ell)$ in $V(H)$, set $H'=H-\{a_1,\ldots,a_\ell,b_1,\ldots,b_\ell\}$ and $n'=|H'|=n-2\ell$, and note that $\delta(H')\ge (\delta_{k,\ell}+\gamma/2)n'$ as $1/n\ll 1/k$. Using Lemma~\ref{lemma:absorbing}, find an absorbing $\ell$-path $P_0$ in $H'$ with at most $\gamma n'/16$ vertices which can absorb any collection of at most $2\alpha n'$ pairwise disjoint $(\beta,t)$-good  $(k-\ell)$-sets in $V(H')$ (here we have used that $\delta(H')\ge \gamma n'
/16$). Let $G$ be an auxiliary $(k-\ell)$-graph with $V(G)=V(H')$ and edge set consisting of all those $(k-\ell)$-sets in $H'$ which are $(\beta,t)$-good. By Lemma~\ref{lemma:badsets}, for every $v\in V(G)\setminus V(P_0)$ we have
\[d_G(v)\ge \binom{n'}{k-\ell-1}-\theta n'^{k-\ell-1}\geq (1-{\theta'})\binom{n'}{k-\ell-1}.\]
Let $R\subset V(H')$ be a random subset of size $\alpha n'$. Then, by Lemma~\ref{lemma:reservoir}, with probability $1-o(1)$,
\begin{enumerate}
    \item [(i)] $d_G(v,R)\ge (1-2\theta')\binom{|R|}{k-\ell-1}$ for every $v\in V(G)\setminus V(P_0)$, and
    \item [(ii)] $d_H(S,R)\ge (\delta_{k,\ell}+\gamma/4)|R|$ for every $S\in \binom{V(H)}{k-1}$.
\end{enumerate}
Moreover, as $\E[|R\cap V(P_0)|]=\alpha |P_0|$, Lemma~\ref{hypergeom} implies that  $|R\cap V(P_0)|\le  2\alpha |P_0|\le \alpha \gamma n'/8$ with probability $1-o(1)$. Therefore, there is a choice of $R$ such that, letting $R'=R\setminus V(P_0)$, we have
\begin{enumerate}[label = \textbf{R\arabic{enumi}}]
\item\label{reservoir:1}$\alpha n'\ge |R'|\ge (1-\mu)\alpha n'$,
    \item\label{reservoir:2}$d_G(v,R')\ge (1-\mu)\binom{|R'|}{k-\ell-1}$ for every $v\in V(G)\setminus V(P_0)$, and
    \item\label{reservoir:3}$d_H(S,R')\ge (\delta_{k,\ell}+\gamma/8)|R'|$ for every $S\in \binom{V(H)}{k-1}$.
\end{enumerate}
Let $V'\subseteq V(H')\setminus (R'\cup V(P_0))$ be a subset obtained by removing at most $k-\ell$ vertices so that $|V'|$ is divisible by $k-\ell$. We write $H''=H[V']$ and note that $\delta(H'')\ge (\delta_{k,\ell}+\gamma/16)|H''|$. Use Theorem~\ref{theorem:Dirac} to find a Hamilton $\ell$-cycle in $H''$, and hence $H''$ contains an $\ell$-path $P$ with $|P|\ge|H''|-2k$. Let $\mathbf x'=(x_1',\ldots,x_\ell')$ and $\mathbf y'=(y_1',\ldots,y_\ell')$ be the ends of $P_0$, and let $\mathbf x=(x_1,\ldots,x_\ell)$ and $\mathbf y=(y_1,\ldots,y_\ell)$ be the ends of $P$. Using Lemma~\ref{lemma:diameter} and~\ref{reservoir:3}, we find sequences of vertices  $L_{\mathbf{ax'}}, L_{\mathbf{y'x}}$, and $L_{\mathbf{yb}}$ such that
\begin{itemize}
    \item $|L_{\mathbf{ax'}}|,|L_{\mathbf{y'x}}|, |L_{\mathbf{yb}}|\le 8k^5$,
    \item $L_{\mathbf{ax'}}, L_{\mathbf{y'x}}$ and $L_{\mathbf{yb}}$ are pairwise disjoint and contain vertices only from $R'$, and
    \item $P_{\mathbf{ax'}}=a_1\ldots a_\ell L_{\mathbf{ax'}}x_1'\ldots x_\ell'$,  $P_{\mathbf{y'x}}=y_1'\ldots y_\ell' L_{\mathbf{y'x}}x_1\ldots x_\ell$, and $P_{\mathbf{yb}}=y_1\ldots y_\ell L_{\mathbf{yb}} b_1\ldots b_\ell$ are $\ell$-paths.
\end{itemize}
Therefore, the sequence 
\[Q=a_1\ldots a_\ell L_{\mathbf{ax'}}P_0L_{\mathbf{y'x}}PL_{\mathbf{yb}}b_1\ldots b_\ell\]
is an $\ell$-path in $H$ connecting $\mathbf a$ with $\mathbf b$ and using at most $24k^5$ vertices from $R'$. Let $X=V(H)\setminus V(Q)$ and note that, because of \ref{reservoir:1} and \ref{reservoir:2}, every vertex $v\in X$ satisfies 
\begin{equation}\label{eq:deg:matching}d_G(v,X)\ge (1-\mu)\binom{|R'|}{k-\ell -1}-25k^5\cdot |X|^{k-\ell -2}\ge (1-2\mu)\binom{|X|}{k-\ell-1},\end{equation}
where we have used that $|X\setminus R'|\le 24k^5+2k+(k-\ell)\le 25k^5$ and $1/n\ll 1/k$. Then, by~\eqref{eq:deg:matching} and Theorem~\ref{theorem:matching}, $G[X]$ contains a perfect matching $S_1,\dots, S_j$, which is a collection of at most $|X|\le |R'|+2k+(k-\ell)\le 2\alpha n$ pairwise disjoint $(\beta,t)$-good $(k-\ell)$-sets. Finally, using Lemma~\ref{lemma:absorbing} we can absorb all the vertices from $\bigcup_{i\in [j]}S_i$ which concludes the proof.\end{proof}

The proof of Lemma~\ref{lemma:connected:cliques} follows a similar strategy. Let us sketch the main steps of the proof here. Given two disjoint tuples $\mathbf a=(a_1,\ldots, a_{t-1})$ and $\mathbf b=(b_1,\ldots, b_{t-1})$ that support $(t-1)$-cliques in $H$, set $H'=H-\{a_1,\ldots, a_{t-1},b_1,\ldots, b_{t-1}\}$. The proof of Lemma~\ref{lemma:connected:cliques} consists of the following steps:
\begin{itemize}
    \item Find a short absorbing path $P_0$ in $K_t(H')$ capable of absorbing any collection of $o(n)$ vertices.
    \item Set aside a reservoir $R$ in $H'- V(P_0)$ and find a Hamilton tight path in $K_t(H'-(V(P_0)\cup R))$ (this can be done as $H'-(V(P_0)\cup R)$ satisfies~\eqref{eq:cliques} with a slightly worse error term).
    \item Using a corresponding connecting lemma (\cite[Lemma 4.1]{pavez2021towards}) and the properties of the reservoir, find an almost spanning path $Q$ with ends $\mathbf{a}$ and $\mathbf b$ which contains $P_0$.
    \item Finish the embedding using the properties of the absorbing path.
\end{itemize}
The main difference with the proof of Lemma~\ref{lemma:connected} is that, in this case, we do not need to use any matching result in the absorption step as $H$ contains no bad vertices (see \cite[Lemma 7.2]{pavez2021towards}).
\end{document}